\documentclass{amsart}
\usepackage{mathtools}
\usepackage{amsfonts}
\usepackage{amsmath}
\usepackage{amsthm}
\usepackage[square, numbers]{natbib}
\usepackage{cleveref}
\usepackage{amssymb}
\usepackage{tikz-cd}
\usepackage{tikz}
\usepackage{color}
\usepackage{array}
\usepackage{multirow}
\usepackage{gensymb}
\usepackage{tabularx}
\usepackage{graphicx}
\graphicspath{ {images/} }
\usepackage{caption}
\usepackage{enumerate}
\usepackage{setspace}
\usepackage{float}
\usepackage{fullpage}
\setcounter{tocdepth}{2}

\title{Euler Systems and Selmer Bounds for GU(2,1)}
\author{Muhammad Manji}

\newtheorem{thm}{Theorem}[section]
\newtheorem{lem}[thm]{Lemma}
\newtheorem{prop}[thm]{Proposition}
\newtheorem{cor}[thm]{Corollary}
\newtheorem{conj}[thm]{Conjecture}
\theoremstyle{definition}
\newtheorem{defn}[thm]{Definition}
\theoremstyle{remark}
\newtheorem{rmk}[thm]{Remark}
\setcounter{section}{-1}


\begin{document}
	\vfill
	\begin{abstract}
		We investigate properties of the Euler system associated to certain automorphic representations of the unitary similitude group GU(2,1) with respect to an imaginary quadratic field $E$, constructed by Loeffler-Skinner-Zerbes. By adapting Mazur and Rubin's Euler system machinery we prove one divisibility of the ``rank 1" Iwasawa main conjecture under some mild hypotheses. When $p$ is split in $E$ we also prove a ``rank 0" statement of the main conjecture, bounding a particular Selmer group in terms of a $p$-adic distribution conjecturally interpolating complex $L$-values. We then prove descended versions of these results, at integral level, where we bound certain Bloch--Kato Selmer groups. We will also discuss the case where $p$ is inert, which is a work in progress.
	\end{abstract}
	\maketitle
	\vspace{30pt}
	\tableofcontents
	\section{Introduction}
	\par In the past decade there has been a series of developments in the theory of Euler systems, in particular the construction of new Euler systems for Shimura varieties attached to linear algebraic groups such as $\mathrm{GL}_2 \times_{\mathrm{GL}_1} \mathrm{GL}_2$, $\mathrm{GSp}_4$, as well as the construction for GU(2,1) in \citep{LSZ21} discussed in this paper. In the late 20th and early 21st century, examples of Euler systems were hard to find but a general machinery was developed by Kolyvagin, Mazur, Rubin and others to use Euler systems to bound Selmer groups in terms of special values of $p$-adic $L$-functions. One main aim of this is to solve the Bloch--Kato conjecture for these cases. With an increasing array of examples, new techniques are being developed to extend existing results to new cases. 
	\par The aim of this paper is to look at a certain part of the machinery, the Iwasawa main conjectures, and adapt the results for the GU(2,1) Euler system. Partial results have been obtained (see Theorem 3.3 and corollary 3.6), and will hopefully lead to progress towards the Bloch--Kato conjecture for GU(2,1). 
	\par In \S1 we discuss automorphic representations for GU(2,1) and the existence of associated Galois representations coming from the Langlands program. We also discuss ramification, Hodge--Tate weights and ordinarity conditions which will be important for later sections. 
	\par We build on this in \S2 by looking at certain subrepresentations of our Galois representation $V$ after twisting by Hecke characters. We compute all such subrepresentations and present the information in a diagram, which conjecturally acts as a roadmap telling us where we can find Euler systems of different ranks associated to $V$. Moreover, we present the existing construction of a rank 1 Euler system for $V$ from \citep{LSZ21}, which matches the conjectural picture. We make a clear distinction between the cases where $p$ is split and inert in $E$. The latter case will present more challenges as we progress.
	\par In \S3 we adapt versions of Mazur and Rubin's Euler system machinery from \citep{MR04} in order to prove one divisibility of the rank 1 Iwasawa main conjecture for such $V$ (i.e.\ a statement in terms of Selmer groups, without $p$-adic $L$-functions), and reduce the proof to a lemma about vanishing of some local $H^0$ groups. We will also use the language of Selmer complexes and derived base change to establish a version of the main conjecture at an integral level rather than over the Iwasawa algebra.
	\par In \S4 we will deal only with the case where $p$ splits in $E$, and use a Coleman map to identify the Euler system with a distribution in the Iwasawa algebra which we call a ``motivic $p$-adic $L$-function", which conjecturally interpolates complex $L$-values. Using the results from \S3, we will bound certain Selmer groups using this distribution, and use this result to gain information about the Bloch--Kato Selmer group. In the case where $p$ is inert new methods are needed, we outline a strategy which we will use to attack the problem (which is a work in progress). In order to tie the distribution to an analytic $p$-adic $L$-function and prove the full statements of the Iwasawa main conjecture we would need an explicit reciprocity law, and the progress of other authors towards this is briefly discussed at the end of the section.

	\section{Setup}
	\par Suppose $E$ is an imaginary quadratic field of discriminant $-D$, with non-trivial automorphism $c: x \mapsto \bar{x}$. Identify $E \otimes \mathbb{R} \cong \mathbb{C}$ such that $\delta= \sqrt{-D}$ has positive imaginary part. Let $J \in \mathrm{GL}_3$ be the hermitian matrix
	\begin{equation*} J = \begin{pmatrix}
		0 & 0 & \delta^{-1} \\
		0 & 1 & 0\\
		-\delta^{-1} & 0 & 0 
		\end{pmatrix}.
	\end{equation*}
	Let $G$ be the group scheme over $\mathbb{Z}$ such that for a $\mathbb{Z}$-algebra $R$,
	$$ G(R) = \{ (g, \nu) \in \mathrm{GL}_3(\mathcal{O}_E \otimes R) \times R^\times : \prescript{t}{}{\bar{g}}Jg=\nu J \}.$$
	Then $G(\mathbb{R})$ is the unitary similitude group GU(2,1), which is reductive over $\mathbb{Z}_l$ for all $l \nmid D$. We are interested in cuspidal automorphic representations of $G$, and we can relate ($L$-packets of) these to ($L$-packets of) cuspidal automorphic representations of $\mathrm{Res}^{E}_{\mathbb{Q}}(\mathrm{GL}_3 \times \mathrm{GL}_1)$ via base change, following the notation of \citep{LSZ21}, via the referenced result of Mok.
	
	\begin{defn} A regular algebraic, essentially conjugate self-dual, cuspidal (``RAECSDC") representation of $\mathrm{GL_3}/E$ is a pair $(\Pi, \omega)$ such that $\Pi$ is a cuspidal automorphic representation of $\mathrm{GL}_3 $ and $\omega$ is a Hecke character over $\mathbb{Q}$ such that $\Pi_\infty$ is regular algebraic and $\Pi^c = \Pi^\vee \otimes (\omega \circ N_{E/\mathbb{Q}})$. Say $\Pi$ is RAECSDC if there exists $\omega$ such that $(\Pi, \omega)$ is RAECSDC. \end{defn}
	
	\begin{thm}[\citep{Mok15} and others] Given a RAECSDC representation $(\Pi, \omega)$ of $\mathrm{GL}_3/E$, there exists a unique globally generic, cuspidal automorphic representation $\pi$ of $G$ such that the base change of $\pi_v$ at a place $w \mid v$ is $\Pi_w$, and $\pi$ has central character $\chi^{c}_{\Pi}/(\omega \circ N_{E/\mathbb{Q}})$. Moreover $\pi$ is essentially tempered for all places $v$, and $\pi_\infty$ is regular algebraic for $G(\mathbb{R})$. \end{thm}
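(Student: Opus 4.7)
The plan is to assemble this statement from Mok's endoscopic classification for quasi-split unitary groups \citep{Mok15} together with a descent argument from the unitary group $U(2,1)$ to the similitude group $G = \mathrm{GU}(2,1)$. First I would apply Mok's main theorem to the quasi-split inner form $U(2,1)$: given the RAECSDC pair $(\Pi,\omega)$, his classification produces a global discrete $L$-packet $\pi^{U}$ on $U(2,1)(\mathbb{A})$ whose weak base change to $\mathrm{GL}_3(\mathbb{A}_E)$ is $\Pi$. Since $\Pi$ is cuspidal, this packet is stable and tempered, and its internal structure at each place is governed by local Langlands for unitary groups: at finite places split in $E$ base change is an isomorphism and $\pi^{U}_v$ is pinned down, while at inert finite places and at infinity each local $L$-packet contains a unique member which is generic with respect to the fixed Whittaker datum.

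Next I would extract the unique globally generic member. Taking the restricted tensor product of the locally generic elements of each packet yields a candidate which is, by construction, locally generic at every place. For quasi-split unitary groups the generic packet conjecture is a consequence of Mok's work, so local genericity everywhere forces this candidate to be a genuine discrete cuspidal automorphic representation, and it is the unique globally generic member of $\pi^{U}$.

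To pass from $U(2,1)$ to $G$ I would use the short exact sequence $1 \to U(2,1) \to G \to \mathbb{G}_m \to 1$ given by the similitude character: extensions of $\pi^{U}$ to an automorphic representation of $G(\mathbb{A})$ are parametrised by Hecke characters of $\mathbb{A}^\times_{\mathbb{Q}}/\mathbb{Q}^\times$ compatible with the centre. The prescribed central character $\chi^{c}_\Pi/(\omega \circ N_{E/\mathbb{Q}})$ then singles out a unique such extension, producing the required $\pi$. Temperedness and regular algebraicity of $\pi_\infty$ follow automatically from the corresponding properties of $\Pi$ via the archimedean local Langlands correspondence and compatibility of base change with local parameters; temperedness of RAECSDC representations is Caraiani's theorem and is preserved under descent.

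The main obstacle is precisely the step from $U(2,1)$ to $\mathrm{GU}(2,1)$ with prescribed central character: a priori the existence and uniqueness of such an extension require a Galois/adelic cohomological vanishing statement ensuring the similitude character lifts consistently. This vanishing is what underlies the ``and others'' in the citation, and in our rank-three setting it can either be extracted from the dictionary between automorphic forms on $U(2,1)$ and on $G$ used throughout \citep{LSZ21}, or checked directly from the triviality of $H^1$ of the relevant centre. Everything else is a bookkeeping exercise in matching local Langlands parameters place by place.
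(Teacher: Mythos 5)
There is nothing internal to compare against here: the paper does not prove this statement, it imports it wholesale (it is essentially \citep[Thm.~2.6.2 and \S 2]{LSZ21}, with \citep{Mok15} doing the heavy lifting), so your task is really to reconstruct the literature argument. Your skeleton is the standard one and is broadly right: Mok's endoscopic classification for the quasi-split $U(2,1)$ attaches to the cuspidal conjugate self-dual $\Pi$ a stable tempered global packet, each local packet has a unique generic member for the fixed Whittaker datum, the multiplicity formula makes the everywhere-locally-generic element automorphic and it is the unique globally generic member, temperedness is Caraiani's theorem, and regular algebraicity of $\pi_\infty$ follows by matching infinitesimal characters through archimedean base change.

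The genuine gap is the step from $U(2,1)$ to $G=\mathrm{GU}(2,1)$, which you cannot run the way you set it up. First, the centre of $G$ is $\mathrm{Res}_{E/\mathbb{Q}}\mathbb{G}_m$ (scalar matrices), so the prescribed central character $\chi_\Pi^c/(\omega\circ N_{E/\mathbb{Q}})$ is a Hecke character of $\mathbb{A}_E^\times$; a parametrisation of extensions by Hecke characters of $\mathbb{A}_\mathbb{Q}^\times/\mathbb{Q}^\times$ pulled back through $\nu$ cannot even express, let alone prescribe, this datum. Second, the sequence $1\to U(2,1)\to G\xrightarrow{\ \nu\ } \mathbb{G}_m\to 1$ is not exact on adelic points: since $H^1(\mathbb{Q}_v,\mathrm{SU}(3))=0$, the image of $\nu$ on $G(\mathbb{Q}_v)$ is $\{t\in\mathbb{Q}_v^\times : t^3\in N_{E_v/\mathbb{Q}_v}(E_v^\times)\}=N_{E_v/\mathbb{Q}_v}(E_v^\times)$, which has index $2$ at inert places, so $G(\mathbb{A})/U(2,1)(\mathbb{A})$ is not $\mathbb{A}_\mathbb{Q}^\times$ and your torsor description breaks down; the ``triviality of $H^1$ of the relevant centre'' you invoke is not the statement needed. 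The correct mechanism, available precisely because $3$ is odd, is the surjection $U(2,1)\times \mathrm{Res}_{E/\mathbb{Q}}\mathbb{G}_m\to G$, $(u,z)\mapsto zu$, with kernel $U(1)$ embedded anti-diagonally: one extends $\pi^U$ by a Hecke character of $E$ agreeing with the central character of $\pi^U$ on $U(1)(\mathbb{A})$, and the stated central character of $\pi$ then pins down this character (with the usual $\mathrm{GL}_2$-versus-$\mathrm{SL}_2$-style restriction/packet bookkeeping to see that the resulting $\pi$ is cuspidal, globally generic and unique). With that step repaired, the rest of your outline is indeed bookkeeping of local parameters.
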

	\begin{defn} Say a regular algebraic representation $\pi$ of $G(\mathbb{A})$ is non-endoscopic if it arises from a RAECSDC representaion $(\Pi, \omega)$ via the above construction. \end{defn}
	
	\par Given such an automorphic representation, it is natural to ask whether we can associate to it a Galois representation in line with the Langlands program. The next theorem shows the answer is yes, and after this we can start to think about associated Euler systems. 
	\par Let $(\Pi, \omega)$ be a RAECSDC representation of $\mathrm{GL}_3/E$, and let $w$ be a prime for which $\Pi_w$ is unramified. Let $q=\mathrm{Nm}(w)$ and define $P_w(\Pi,X)$ to be the Hecke polynomial i.e.\ satisfying $$ P_w(\Pi, q^{-s})^{-1} = L(\Pi_w, s)$$ .
	
	\begin{thm}[\citep{BLGHT11}, Theorem 1.2]
	The coefficients of $P_w(\Pi,X)$ lie in an extension $F_{\Pi}$ of $E$ independent of $w$, and for each place $\mathfrak{P} \mid p$ of $F_{\Pi}$, there is a 3-dimensional $\mathrm{Gal}(\overline{E}/E)$ representation $V_{\mathfrak{P}}(\Pi)$ over $F_{\Pi,\mathfrak{P}}$, uniquely determined up to semisimplification, such that for $w \nmid p$ such that $\Pi_w$ is unramified,
	$$ \mathrm{det}(1-X\cdot\mathrm{Frob}_w^{-1}:V_{\mathfrak{P}}(\Pi)) = P_w(\Pi, qX)$$ where $\mathrm{Frob}_w$ is the arithmetic Frobenius. \end{thm}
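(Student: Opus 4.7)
The approach is to realize $V_{\mathfrak{P}}(\Pi)$ inside the $\ell$-adic \'{e}tale cohomology of an appropriate unitary Shimura variety. Since $(\Pi,\omega)$ is RAECSDC, Mok's endoscopic classification (the previous theorem) descends $\Pi$ to a regular algebraic cuspidal automorphic representation $\pi$ of a unitary group in three variables. The most direct choice is to work with $G = \mathrm{GU}(2,1)$ itself, whose Shimura variety is a Picard modular surface of dimension two. By the Matsushima formula, $\pi_f$ then appears as a Hecke-isotypic summand in the middle-degree \'{e}tale cohomology $H^2(\mathrm{Sh}_K(G)_{\overline{E}}, \mathcal{L}_\lambda)$, where $\mathcal{L}_\lambda$ is the $\ell$-adic local system attached to the highest weight of $\pi_\infty$. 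The $\pi_f$-isotypic component, a finite-dimensional vector space over $F_{\Pi,\mathfrak{P}}$ carrying a $\mathrm{Gal}(\overline{E}/E)$-action, is the candidate for $V_{\mathfrak{P}}(\Pi)$. That $F_\Pi$ is a number field independent of $\mathfrak{P}$ follows from the finiteness of rational Hecke eigensystems on cohomological cuspidal representations.

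The principal technical obstacle is verifying local-global compatibility at unramified primes $w \nmid p$, namely $\det(1 - X \cdot \mathrm{Frob}_w^{-1} \mid V_{\mathfrak{P}}(\Pi)) = P_w(\Pi, qX)$. This is achieved by the Langlands--Kottwitz method: compute the trace of $\mathrm{Frob}_w$ on the \'{e}tale cohomology of $\mathrm{Sh}_K(G)$ via the Lefschetz fixed-point formula on the special fibre of a smooth integral model at $w$, match the resulting counting expression with the geometric side of the Arthur--Selberg trace formula, and isolate the $\pi$-contribution. The identification of Satake parameters on $G$ with those of $\Pi$ then comes from the unramified local base change identity supplied by Mok, combined with the recipe for the local Langlands correspondence at unramified places.

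The remaining subtlety is that in the endoscopic part of the cohomology the $\pi$-isotypic component may not immediately yield a clean three-dimensional Galois representation, and one has to contend with a virtual rather than honest summand. The standard workaround, and the path taken in the chain of references culminating in \citep{BLGHT11}, is to pass via solvable base change to a CM field $F/E$ of higher degree so that $\Pi_F$ descends to a totally definite form of $U(3)$ over $F^+$, where the Galois representation can be isolated in the cohomology of an auxiliary Shimura variety attached to a non-compact inner form; one then descends back to $\mathrm{Gal}(\overline{E}/E)$ using Arthur--Clozel cyclic descent. The fact that $V_{\mathfrak{P}}(\Pi)$ is only determined up to semisimplification in the statement reflects the indirect nature of this construction, and the uniqueness itself is then a consequence of Chebotarev density applied to the Frobenius traces at unramified places.
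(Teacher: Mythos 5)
The paper does not actually prove this statement: it is quoted, with its proof, from \citep{BLGHT11} (Theorem 1.2), whose argument in turn rests on a long chain of work (Kottwitz, Clozel, Harris--Taylor, Shin, Chenevier--Harris, and for $n=3$ Rogawski and the Picard modular surface literature). So the expected ``proof'' here is the citation, and your plan should be judged as a sketch of that literature. At that level your outline has the right shape --- realization of $\pi_f$ in the middle-degree cohomology of a unitary Shimura variety, Langlands--Kottwitz comparison at unramified places, and base-change tricks plus Chebotarev for uniqueness --- but several of the concrete steps, as written, would fail.

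First, a totally definite form of $\mathrm{U}(3)$ over a totally real field has \emph{zero-dimensional} Shimura varieties, so you cannot ``isolate the Galois representation in the cohomology of a Shimura variety attached to a totally definite form''; the actual constructions use unitary groups that are definite at all archimedean places except one (so that the associated Shimura variety has positive dimension), and the representations that are still not visible geometrically (endoscopic or non-quasi-split cases) are handled by congruence/eigenvariety arguments (Chenevier--Harris) or Igusa-variety arguments (Shin), none of which appears in your sketch. Second, ``Arthur--Clozel cyclic descent'' is a statement about descending \emph{automorphic} representations; what is needed at that stage is descent of the \emph{Galois} representation from solvable CM extensions $F/E$ back to $\mathrm{Gal}(\overline{E}/E)$, which is done by a patching argument using Chebotarev density, essential conjugate self-duality, and compatibility of Frobenius eigenvalues under base change. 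Third, for the direct $\mathrm{GU}(2,1)$ route, the $\pi_f$-isotypic component of $H^2$ of the Picard modular surface is a priori only a multiple of the sought representation (twisted by a similitude/abelian character), and for endoscopic $\pi$ it can be a proper piece of dimension $<3$; Matsushima plus the Lefschetz/trace-formula comparison alone does not give $\det(1-X\cdot\mathrm{Frob}_w^{-1})=P_w(\Pi,qX)$ without the stable trace formula and the endoscopic classification (Rogawski, \citep{Mok15}) controlling these multiplicities. (Minor: the coefficients should be $\mathfrak{P}$-adic with $\mathfrak{P}\mid p$, matching the statement, rather than $\ell$-adic for an auxiliary $\ell$.) In short, the proposal is a reasonable roadmap but not a proof, and the honest course in this paper is to invoke \citep{BLGHT11} as the authors do.
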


	\noindent The rest of $\S 1$ will establish a few properties of this representation which we will need later. 
	\begin{prop}[\citep{Xia19}] Fixing $\Pi$ and letting $p$ vary, the set of rational primes $p$ such that $V_{\mathfrak{P}}(\Pi)$ is irreducible for all $\mathfrak{P} \mid p$ has density 1. \end{prop}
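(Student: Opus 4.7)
The plan is to show that the set of rational primes $p$ for which some $V_{\mathfrak{P}}(\Pi)$ with $\mathfrak{P}\mid p$ is reducible has density zero, by leveraging the compatible system structure of the family $\{V_{\mathfrak{P}}(\Pi)\}_{\mathfrak{P}}$ against the cuspidality of $\Pi$. The essential input is Theorem~1.4: the characteristic polynomials at unramified Frobenii lie in the fixed number field $F_{\Pi}$, independent of $\mathfrak{P}$, making the family strictly compatible.

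First I would argue by contradiction: suppose the set $S$ of such bad primes has positive upper density. Since $\dim V_{\mathfrak{P}} = 3$, any proper nontrivial sub has dimension one or two, and the conjugate self-duality coming from the RAECSDC condition $\Pi^{c} \cong \Pi^{\vee}\otimes(\omega\circ N_{E/\mathbb{Q}})$ pairs these two possibilities. Hence, after passing to a subset of the same density, I may assume that for every $\mathfrak{P}$ over some $p \in S$ there is a Galois-stable line in $V_{\mathfrak{P}}(\Pi)$ cut out by a character $\chi_{\mathfrak{P}}:\mathrm{Gal}(\overline{E}/E)\to \overline{F}_{\Pi,\mathfrak{P}}^{\times}$.

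Next I would compare traces. For each unramified prime $w$ of $E$, the value $\chi_{\mathfrak{P}}(\mathrm{Frob}_{w})$ must be one of the three roots of $P_{w}(\Pi,qX) \in F_{\Pi}[X]$, a polynomial which does not depend on $\mathfrak{P}$. A pigeonhole argument over the three possible roots, combined with the uniform bound on Hodge--Tate weights supplied by regularity of $\Pi_{\infty}$, lets one apply the standard compatible-system machinery (à la Henniart, Ramakrishnan, or BLGGT) to extract a single algebraic Hecke character $\chi$ of $E$ whose $\mathfrak{P}$-adic realization agrees with $\chi_{\mathfrak{P}}$ for infinitely many $\mathfrak{P}$. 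Comparing $L$-functions then produces an isobaric decomposition $\Pi \cong \chi \boxplus \Pi'$ with $\Pi'$ of $\mathrm{GL}_{2}$-type, directly contradicting the cuspidality built into Definition~1.1.

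The hard part will be the extraction step: promoting a positive-density family of a priori unrelated one-dimensional $\mathfrak{P}$-adic characters $\chi_{\mathfrak{P}}$ into a single global algebraic Hecke character $\chi$ of $E$. This is exactly where the compatibility of the system — coefficients in the fixed field $F_{\Pi}$, control of Hodge--Tate weights from regularity, and conjugate self-duality to forbid a mixed ``line plus $2$-dimensional sub'' pattern drifting incompatibly across $\mathfrak{P}$ — must all be used in concert. For $n = 3$ the low dimension keeps this technically manageable; in higher rank the analogous statement rests on substantially deeper potential-automorphy results.
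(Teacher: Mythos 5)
The paper does not actually prove this proposition: it is imported verbatim from \citep{Xia19} and used as a running hypothesis-justification, so there is no internal argument to compare yours against. Your sketch does follow the strategy of that literature (Ribet-style reducibility arguments as generalised by Taylor, Calegari--Gee and Xia): assume a stable line for many $\mathfrak{P}$, promote it to an algebraic Hecke character, and contradict cuspidality via $L$-functions. One small point in your favour: the reduction to a stable line needs no passage to a subset at all, since a $2$-dimensional invariant subspace gives a $1$-dimensional quotient, hence a line in $V^\vee \cong V^c \otimes (\text{character})$, and the operators $V^c(G_E)$ coincide with $V(G_E)$ because $G_E$ is normal in $G_\mathbb{Q}$; so reducibility always produces a stable line.

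However, as a proof your proposal has genuine gaps at both load-bearing steps. First, the ``extraction'' is not where you locate it: for a \emph{single} $\mathfrak{P}$ the character $\chi_{\mathfrak{P}}$ is already de Rham (a subquotient of a de Rham representation) and unramified outside a finite set, hence by the Fontaine/Serre classification of abelian locally algebraic representations it is the $\mathfrak{P}$-adic avatar of an algebraic Hecke character of $E$; pigeonholing across infinitely many $\mathfrak{P}$ (if one wants it) requires uniform conductor bounds and the fixed regular infinity type, not a pigeonhole over the three roots of $P_w(\Pi,X)$, which does nothing by itself. Second, and more seriously, the sentence ``comparing $L$-functions then produces an isobaric decomposition $\Pi \cong \chi \boxplus \Pi'$'' asserts exactly the hard point. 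Knowing that $\chi$ occurs in the semisimplification of $V_{\mathfrak{P}}(\Pi)$ gives only a factorisation of partial $L$-functions $L^S(s,\Pi\otimes\chi^{-1}) = \zeta_E^S(s)\,L^S(s,W\otimes\chi^{-1})$ (suitably normalised); to contradict Jacquet--Shalika (holomorphy and non-vanishing at the edge for cuspidal $\Pi$) you must rule out the zeta pole being cancelled by a zero of the second factor, and that needs purity/Ramanujan-type bounds at unramified places for this compatible system together with a Landau-type positivity argument (or, alternatively, automorphy of the $2$-dimensional complement). None of these inputs is identified in your sketch, and without them the argument does not close; indeed, if it closed as written it would prove irreducibility for every $\mathfrak{P}$, which should signal that the density-one formulation of \citep{Xia19} reflects difficulties your outline has bypassed.
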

	\noindent This justifies an assumption we use from now on, that $V_{\mathfrak{P}}(\Pi)$ is irreducible.
	
	\par Recall $\Pi$ is regular algebraic, so we can define its weight at each embedding $\tau: E \hookrightarrow F_{\Pi}$ which is a triple of integers $a_{\tau,1} \geq a_{\tau,2} \geq a_{\tau,3}$.  Twisting by a Hecke character if necessary, we can assume that $\Pi$ has weight $(a+b,b,0)$ at the identity embedding and $(a+b,a,0)$ at the conjugate embedding for some $a,b \geq 0$. When discussing Galois representations, for convenience we will be use the convention of Hodge numbers (negative of Hodge--Tate weights), so the cyclotomic character will have Hodge number $-1$.
	
	\begin{prop}[\citep{BLGHT11}] The representation $V_{\mathfrak{P}}(\Pi)$ is de Rham at all primes above $p$, and has Hodge numbers $\{ 0, 1+b, 2+a+b \}$ at the identity embedding $E \hookrightarrow F_{\Pi,\mathfrak{P}}$ and $\{ 0, 1+a, 2+a+b \}$ at the conjugate embedding. Moreover the coefficients of $P_w(\Pi,qX)$ lie in $\mathcal{O}_{F_\Pi}$ \end{prop}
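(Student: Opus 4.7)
The plan is to deduce all three assertions from the main construction of \citep{BLGHT11} together with a careful translation of weight conventions; no genuinely new argument is required. For the de Rham property, the construction of $V_{\mathfrak{P}}(\Pi)$ in \citep{BLGHT11} realises this representation (potentially, after a soluble base change) inside the $p$-adic \'etale cohomology of a unitary Shimura variety. Any such representation is geometric, and Faltings' $p$-adic comparison theorems immediately give that $V_{\mathfrak{P}}(\Pi)$ is de Rham at every prime above $p$; the congruence and descent arguments used to extend the construction to the general RAECSDC setting preserve this property.

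For the Hodge numbers I would invoke the standard dictionary between the infinitesimal character of $\Pi_\infty$ and the Hodge--Tate weights of the associated Galois representation at each embedding, as made explicit in \citep{BLGHT11}. Concretely, if $\Pi_\infty$ has regular algebraic weight $(a_{\tau,1}\geq a_{\tau,2}\geq a_{\tau,3})$ at $\tau$, the dictionary produces Hodge numbers $\{a_{\tau,i}+(3-i)\}_{i=1,2,3}$ at $\tau$. Substituting the normalised weights $(a+b,b,0)$ and $(a+b,a,0)$ at the identity and conjugate embeddings respectively yields $\{0,1+b,2+a+b\}$ and $\{0,1+a,2+a+b\}$, and the convention that the cyclotomic character has Hodge number $-1$ fixes the overall shift so as to recover exactly the sets in the statement.

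For integrality of the coefficients of $P_w(\Pi,qX)$ I would argue via temperedness: since $\pi$ is essentially tempered by the theorem of Mok quoted above, the Satake parameters of $\Pi_w$ at an unramified place $w$ are Weil numbers of the appropriate weight, so the reciprocal roots of $P_w(\Pi,qX)$ are algebraic integers. Combined with the already-stated fact that the coefficients lie in $F_\Pi$, this forces them into $\mathcal{O}_{F_\Pi}$.

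The main obstacle I anticipate is purely bookkeeping: \citep{BLGHT11}, Clozel, and the present paper each adopt slightly different normalisations -- shifts by $\rho$, by $(n-1)/2$, or by $(n-i)$, together with possible Tate twists in the very definition of $V_{\mathfrak{P}}(\Pi)$ -- and tracking these carefully is what makes the numerical Hodge numbers come out exactly as stated rather than off by a global shift, especially when distinguishing the identity embedding from its conjugate.
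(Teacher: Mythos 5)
The paper offers no argument of its own here -- the proposition is quoted directly from \citep{BLGHT11} -- so your first two paragraphs (de Rham via the geometric realisation in unitary Shimura varieties and the patching/congruence construction, Hodge numbers via the dictionary $\{a_{\tau,i}+(3-i)\}$ applied to the normalised weights $(a+b,b,0)$ and $(a+b,a,0)$) are exactly the intended content, and your arithmetic does reproduce $\{0,1+b,2+a+b\}$ and $\{0,1+a,2+a+b\}$; the caveat about normalisation shifts and about which $p$-adic embedding pairs with which archimedean weight is the right thing to worry about and is genuinely just bookkeeping.

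The integrality step, however, contains a real gap: the inference ``essentially tempered $\Rightarrow$ the Satake parameters are Weil numbers $\Rightarrow$ the reciprocal roots of $P_w(\Pi,qX)$ are algebraic integers'' is false at the second arrow. A Weil number need not be an algebraic integer: temperedness/purity only constrains the archimedean absolute values of all conjugates (e.g.\ $q\cdot\tfrac{3+4i}{5}$ has every archimedean absolute value equal to $q$ but has a denominator), so it says nothing about non-archimedean valuations. The standard way to get integrality is Galois-theoretic or cohomological rather than archimedean: for each place $\lambda$ of $F_\Pi$ whose residue characteristic is prime to $w$, the representation $V_\lambda(\Pi)$ is a continuous representation of a compact group and hence admits a Galois-stable $\mathcal{O}_{F_\Pi,\lambda}$-lattice, so $\det(1-X\,\mathrm{Frob}_w^{-1})=P_w(\Pi,qX)$ has $\lambda$-integral coefficients; and at places $\lambda$ dividing the residue characteristic of $w$ one uses that the representation is crystalline at $w$ with all Hodge--Tate weights of the relevant normalisation in $[0,2+a+b]$, so weak admissibility (Newton polygon above Hodge polygon) forces the crystalline Frobenius eigenvalues, which are the same reciprocal roots by \citep{BLGHT11}, to have non-negative valuation. (Alternatively: the coefficients are, up to the fixed twist by $q$, eigenvalues of Hecke operators preserving an integral cohomology lattice, hence algebraic integers.) Either of these repairs the third paragraph; temperedness alone does not.
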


	\par From here on we will assume $\Pi_v$ is unramified for each place $v \mid p$, so the Hecke polynomials are defined. We want to make further assumptions about the local behaviour of $\Pi$, which we will do in the next few sections. 
	\begin{defn} Say $\Pi$ is ordinary at $v \mid p$ (with respect to $\mathfrak{P} \mid v$ of $F_\Pi$) if $P_v(\Pi,qX)$ has a factor $(1-\alpha_v X)$ with $\mathrm{val}_{\mathfrak{P}}(\alpha_v)=0$. \end{defn}
	\par Following arguments in \citep[\S2]{BLGHT11}, using $p$-adic Hodge theory one can show that $\Pi$ is ordinary at $v$ if and only if the dual representation $V_{\mathfrak{P}}(\Pi)^*$ has a codimension 1  Galois-invariant subspace $\mathcal{F}^1_v V_{\mathfrak{P}}(\Pi)^*$ at $v$ such that $V_{\mathfrak{P}}(\Pi)^*/\mathcal{F}^1_v$ is unramified and $\mathrm{Frob}_v$ acts on the quotient by $\alpha_v$. Since $\Pi$ is conjugate self-dual (up to a twist), we can see that such a subspace exists if and only if $V_{\mathfrak{P}}(\Pi)$ has a codimension 2 invariant subspace at $v$ with compatible action of $\mathrm{Frob}_{\bar{v}}$. If $\Pi$ is ordinary at all primes above $p$ then $V_{\mathfrak{P}}(\Pi)$ and its dual preserve a full flag of invariant subspaces at each prime above $p$, and we will use the notation $\mathcal{F}^i_v$ for both:
	$$ 0 \subset \mathcal{F}^2_v V_{\mathfrak{P}}(\Pi) \subset  \mathcal{F}^1_v V_{\mathfrak{P}}(\Pi) \subset V_{\mathfrak{P}}(\Pi). $$
	We will henceforth assume ordinarity at all primes above $p$ as we will need the full flag. Relaxed ordinarity conditions may be considered in future work.
	
	\begin{rmk} We have an equivalent statement of ordinarity, which will be useful later. If $V$ is ordinary, there exist crystalline characters $\chi_{i,v}$ such that 
	\begin{equation*} V|_{\mathrm{Gal}(\overline{E_v}/E_v)} \sim \begin{pmatrix}
			\chi_{1,v} & * & * \\
			0 & \chi_{2,v} & * \\
			0 & 0 & \chi_{3,v} \\
		\end{pmatrix}.
	\end{equation*}
	The crystalline characters are graded pieces of $V|_{\mathrm{Gal}(\overline{E_v}/E_v)}$ with Hodge--Tate weights in decreasing order as $i$ increases, and this will help us understand how the crystalline Frobenius map $\phi$ acts on the Dieudonné module $\mathbb{D}_{\mathrm{cris}}(V)$. Later we will use known classifications of local crystalline characters to obtain information about cohomology groups of $V$. \end{rmk}

	\section{Local conditions}  	
	Let $V$ be a $p$-adic Galois representation over a number field $K$ which is Hodge--Tate at the primes above $p$.
	\begin{defn} Let $v$ be a prime above $p$. A Panchishkin subrepresentation of $V$ is a subspace $V^+_v \subset V$ such that
	\begin{itemize} \setlength\itemsep{0.1em}
	\item $V^+_v$ is stable under $G_{K_v}$,
	\item $V^+_v$ has all Hodge--Tate weights $\geq 1$,
	\item $V/V^+_v$ has all Hodge--Tate weights $\leq 0$.	
	\end{itemize}
	\end{defn}
	Such $V_v^+$ is unique if it exists, and gives us a local condition on $V$ which relates to the Bloch--Kato Selmer group we will see in later sections.
	\par Define the rank of $V$ as $ r(V)= \max(0,r_0(V))$ combined with the following; fix considering infinite places $v$ of $K$, let $\sigma_v$ denote complex conjugation in $\overline{K_v}$ for a real place $v$ and let $L$ be the field of definition of $V$. Then
	\begin{align*}  d_{-}(V)= & \sum_{v \mid \infty \enspace \mathrm{real}} \dim_L(V^{\sigma_v=-1}) + \sum_{v \mid \infty \enspace \mathrm{complex}} \dim_L(V)\\ 
	r_0(V)  = & \enspace d_{-}(V) - \sum_{v \mid p} \dim_L \mathrm{Fil}^0 \mathbb{D}_{\mathrm{dR}}(K_v, V) \hfill \\
		 = & \enspace d_{-}(V) - \sum_{v \mid p} \#\{ \leq 0 \textrm{ \enspace Hodge--Tate \enspace weights \enspace at \enspace} v \}.
	\end{align*}
	We will call $V$ $r$-critical if $r(V)=r$ and $r(V^*(1))=0$. If moreover there exist Panchishkin subrepresentations $V^+_v$ for all $v \mid p$, say $V$ satisfies the rank $r$ Panchishkin condition. These subrepresentations are important because of the following prediction.
	
	\begin{conj}[see \citep{LZ20}] If $V$ is $r$-critical and satisfies the rank $r$ Panchishkin condition for $r \geq 0$, there exists a collection of cohomology classes $$ c_F \in \bigwedge^r H^1_f(F,V) $$ satisfying the Euler system compatibility relation, where $F$ varies over finite abelian extensions of $K$. Moreover, $c_K$  is non-zero if and only if the associated complex $L$-function of $V$ satisfies $L^{(r)}(V^*(1),0) \neq 0$. \end{conj}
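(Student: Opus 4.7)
Since this is stated as a conjecture rather than a theorem, my plan is to describe how one would construct such classes for the cases of interest (including GU(2,1), which is the main application), not to prove the conjecture in full generality. The overall strategy is geometric: construct the $c_F$ as pushforwards of motivic classes along embeddings of auxiliary Shimura sub-data into $\mathrm{Sh}(G)$, then apply étale realisation and the Hochschild--Serre edge map to land in $H^1(F,V)$ (and in $\bigwedge^r H^1_f$ in the higher rank case, via cup products of $r$ such classes or by working with a family of subvarieties of the correct codimension).

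First I would choose an auxiliary algebraic subgroup $H\hookrightarrow G$ whose associated Shimura variety has codimension in $\mathrm{Sh}(G)$ equal to the Abel--Jacobi-theoretic codimension predicted by the Panchishkin data. The bookkeeping is exactly what the rank formula encodes: $d_-(V)$ counts the archimedean dimension of the $V$-isotypic part of cohomology, and subtracting the number of non-positive Hodge--Tate weights at primes above $p$ gives the expected rank of the image under the étale Abel--Jacobi map. The existence of a Panchishkin subrepresentation $V^+_v$ at each $v\mid p$ is what makes the local condition cut out by $V^+_v$ agree with the Bloch--Kato local condition $H^1_f(K_v,V)$, so that the pushforward classes actually land in $H^1_f$ rather than merely in $H^1$. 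Checking this compatibility is a standard $p$-adic Hodge theory exercise using $V^+_v$ having all Hodge--Tate weights $\geq 1$ and $V/V^+_v$ being (after modification) crystalline with non-positive weights.

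Second I would prove the Euler system norm relation as $F$ varies over finite abelian extensions (essentially ray class fields ramified at an auxiliary prime $\ell$). This is a purely local computation at $\ell$: one computes the action of the relevant Hecke correspondence at $\ell$ on the pushforward classes for $H$, and matches it with the Euler factor $P_\ell(V^*(1), \ell^{-s})$ using the Satake/Bernstein presentation of the spherical Hecke algebra of $G$ at $\ell$. This step is largely mechanical once the correct $H$ has been chosen, although for GU(2,1) it is precisely the content of the construction in \citep{LSZ21} that is being reused in this paper.

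Third, and this is the hard part, I would try to prove non-vanishing of $c_K$ iff $L^{(r)}(V^*(1),0)\neq 0$ through an explicit reciprocity law. Using the Panchishkin filtration $V^+_v$ one defines a Perrin-Riou/Coleman-type regulator from the local cohomology of $V$ at $v\mid p$ into an Iwasawa algebra; the reciprocity law would identify the image of the localisation of $c_K$ under this regulator with an analytic $p$-adic $L$-function interpolating $L(V^*(1),s)$, so that non-vanishing of the appropriate complex $L$-value forces non-vanishing of the Euler system class via the interpolation formula. The main obstacle, both in general and specifically for GU(2,1), is exactly this step: the existence of the classes and their norm relations is now within reach of existing techniques, but matching the image of the dual exponential to a complex $L$-value remains the serious open problem, and is particularly difficult at primes inert in $E$. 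This is precisely why the present paper derives the divisibility in the main conjecture from the Euler system without assuming a full explicit reciprocity law, and confines the rank zero bound to the split case where the Coleman map is available.
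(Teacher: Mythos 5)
The statement you address is presented in the paper as a \emph{conjecture} (attributed to Loeffler--Zerbes) and is nowhere proved there: it is used only as a heuristic roadmap predicting in which regions $\Sigma^{(i)}$ of Figure 1 one should expect Euler systems of each rank, and the single instance that is actually available --- the rank $1$ case in region $\Sigma^{(2)}$ --- is imported wholesale from \citep{LSZ21} as Theorem 2.3 rather than reproved. So there is no paper proof to compare against, and your decision to present a construction strategy rather than claim a proof is the correct reading of the situation; your template (pushforward of motivic classes from an auxiliary subgroup $H \subset G$, \'etale realisation, tame norm relations via local Hecke algebra computations at auxiliary primes, and a Perrin-Riou/Coleman regulator for the link to $L$-values) is indeed the standard one and matches how the known GU(2,1) classes were produced.

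As an attempt at the statement itself, however, there are genuine gaps you should flag rather than gloss over. First, there is no general recipe for choosing $H$, no argument that the resulting classes are non-trivial, and for $r \geq 2$ no construction of rank $r$ classes is known at all --- which is exactly why the paper restricts attention to $r=0,1$. Second, the Panchishkin condition does make the ordinary-type local condition at $p$ agree with $H^1_f$ for suitable twists, but membership of the classes in $H^1_f(F,V)$ also requires control at bad places and the explicit local-at-$p$ vanishing of the kind recorded in part (iii) of Theorem 2.3, which your sketch elides. Third, and most seriously, the ``$c_K \neq 0$ if and only if $L^{(r)}(V^*(1),0) \neq 0$'' clause is not delivered by an explicit reciprocity law alone: such a law relates $\mathrm{loc}_p(c_K)$ under a regulator to a $p$-adic $L$-value, and passing to the stated equivalence additionally requires non-degeneracy of that regulator (a Perrin-Riou-type non-vanishing statement) and an interpolation formula comparing $p$-adic and complex $L$-values at the point in question --- all of which are open in this setting, as the paper's own discussion in \S4.1 of the missing reciprocity law makes plain. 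Your text is a sound research plan, but it does not reduce the conjecture to known techniques, and the ``if and only if'' direction in particular is of a different order of difficulty from the construction and norm-relation steps.
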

	\par This tells us that we expect to find a (rank 1) Euler system attached to any such $V$ which is 1-critical and a rank 0 Euler system (which is a $p$-adic $L$-function) when $V$ is 0-critical. For $r \geq 2$, little is known about the existence of rank $r$ Euler systems so we will only consider $r=0,1$. This $p$-adic $L$-function will conjecturally interpolate values of the complex $L$-function of $V$, although this is beyond the scope of this paper and will follow in future work.
		
	\par Back to our specific case; first let's assume $p$ splits and fix a choice of prime $\wp \mid p$. Let $\eta$ be an algebraic Hecke character of conductor dividing $\mathfrak{m}p^\infty$ for some ideal $\mathfrak{m} \subset \mathcal{O}_E$ (which we can often take to be the unit ideal) and infinity type $(s,r)$ and let $V = V_{\mathfrak{P}}(\Pi)^*$. Then the Hodge numbers of $V(\eta^{-1})$ are:
	\begin{align*} \{s, 1+b+s, 2+a+b+s \} & \enspace \mathrm{at} \enspace \wp \\
	 \{r, 1+a+r, 2+a+b+r \} & \enspace \mathrm{at} \enspace \bar{\wp}. 
	 \end{align*}

	\par We can see that $V(\eta^{-1})$ can satisfy different rank $t$ Panchishkin conditions (using $t$ for now to avoid a notational clash) depending on the choice of $\infty$-type of $\eta$, and we summarise this below --- plotting the regions where different rank $r$ Panchishkin conditions are met in Figure 1, and tabulating the ranks and subrepresentations in Figure 2. Any unlabelled regions are not $t$-critical for any $t$. Note that we are still assuming ordinarity at all primes above $p$. \vspace{5pt}\\
	
	\begin{figure}[H]
	\includegraphics[width=14cm]{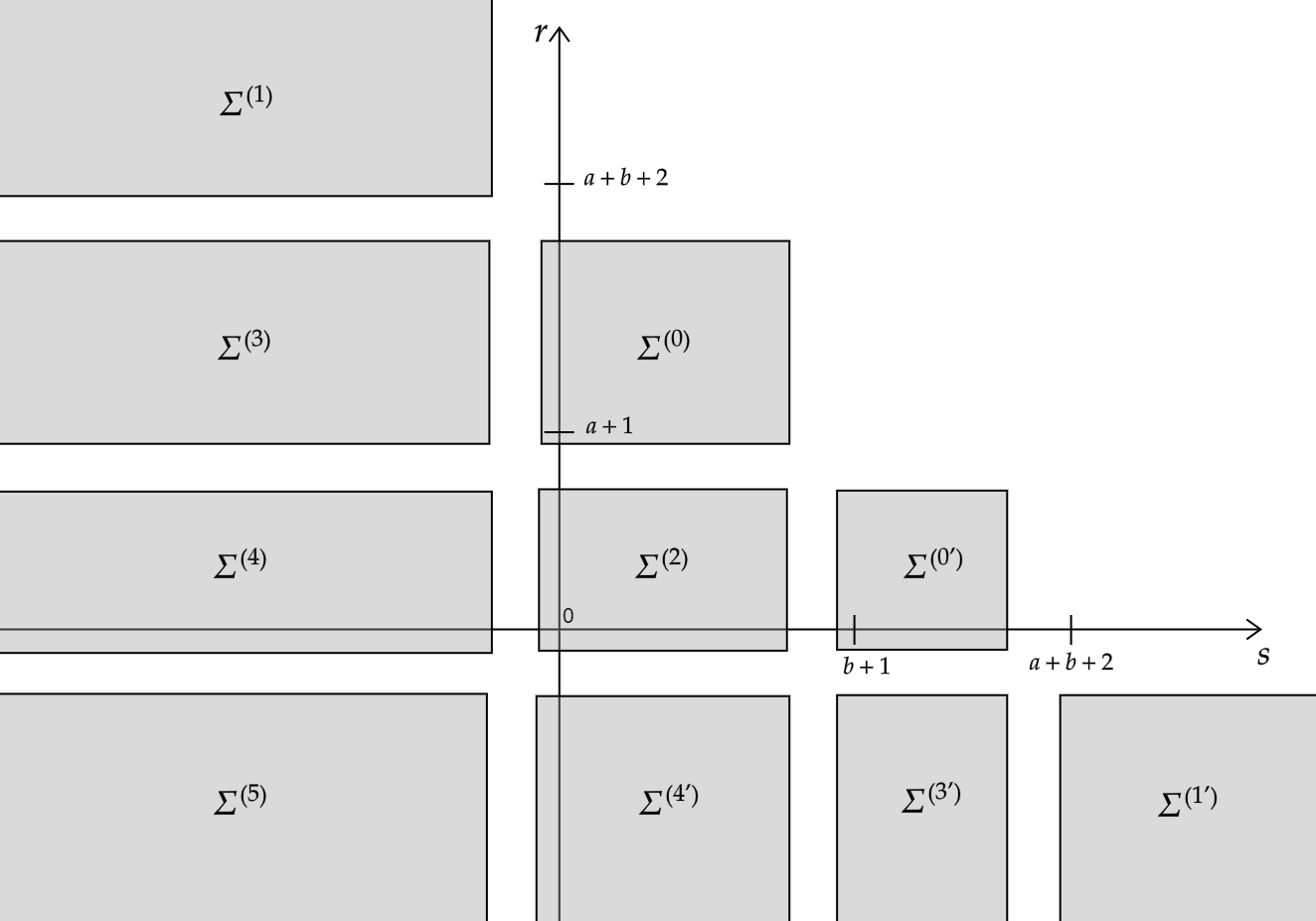}
	\caption{All possible (twisted) $r$-critical regions $V(\eta^{-1})$ as we vary $\eta_\infty$.}
	\label{fig1}
	\end{figure}
	
	\begin{figure}[H]
	{\renewcommand{\arraystretch}{1.2}
	\begin{tabular}{c|c|c|c}
		 \bf Region & \bf Rank $r$ & \bf Panchishkin subrep. at $\wp$ & \bf Panchishkin subrep. at $\bar{\wp}$  \\ \hline
		 $\Sigma^{(0)}$ & 0 & $\mathcal{F}^1$ & $\mathcal{F}^2$ \\
		 $\Sigma^{(0')}$ & 0 & $\mathcal{F}^2$ & $\mathcal{F}^1$ \\
		 $\Sigma^{(1)}$ & 0 & $V$ & 0 \\
		 $\Sigma^{(1')}$ & 0 & 0 & $V$ \\
		 $\Sigma^{(2)}$ & 1 & $\mathcal{F}^1$ & $\mathcal{F}^1$ \\
		 $\Sigma^{(3)}$ & 1 & $V$ & $\mathcal{F}^2$ \\
		 $\Sigma^{(3')}$ & 1 & $\mathcal{F}^2$ & $V$ \\
		 $\Sigma^{(4)}$ & 2 & $V$ & $\mathcal{F}^1$ \\
		 $\Sigma^{(4')}$ & 2 & $\mathcal{F}^1$ & $V$ \\
		 $\Sigma^{(5)}$ & 3 & $V$ & $V$ \\
	\end{tabular} }
		\label{fig2}
		\caption{The Panchishkin subrepresentations for $V(\eta^{-1})$ in each region.}
	\end{figure}

	\par A natural question is to ask about the case where $p$ is inert in $E$. We have the same Hodge--Tate weights at embeddings of $E_p$ into $\overline{F_{\Pi,\mathfrak{P}}}$  (rather than at the primes dividing $p$). We will have the same conditions on $r$ and $s$ for each region but with one key difference; we no longer have Panchishkin subrepresentations for every region. The rank 1 Panchishkin subrepresentation for $\Sigma^{(2)}$ still exists and is defined in the same way (which is crucial as we will see in the next subsection) but the rank 0 ones no longer exist. This will provide problems down the line, as we will later see, and a potential workaround will be discussed in \S4.1.
	
	\subsection{Rank $r$ Euler systems}
	\par From the data and the preceding conjecture, we can predict a rank 1 Euler system to exist for $V(\eta^{-1})$ when the infinity type lies in the region $\Sigma^{(2)}$ --- this has been constructed in \citep[\S12]{LSZ21} (see Theorem 2.3 below). In the split prime case we expect two conjugate rank 1 Euler systems to exist in boxes $\Sigma^{(3)}$ and $\Sigma^{(3')}$ but these have not yet been constructed yet. We can also predict two conjugate pairs of $p$-adic $L$-functions to exist when $V(\eta^{-1})$ lies in the 0-critical regions. For every pair of adjacent boxes, we want to build a bridge connecting the Euler systems, which we call a rank lowering operator. As we only have knowledge about one Euler system, we can only work with twists lying in $\Sigma^{(0)}$ and $\Sigma^{(0')}$. When $p$ is inert, we don't have as much information about local conditions attached to rank 0 boxes or associated $p$-adic $L$-functions. We will discuss this case further in \S4.1.
	\par For now we will now consider both cases where $p$ is split or inert. Fix $c \in \mathbb{Z}_{\geq1}$ coprime to $6pS$ for a finite set $S$ of primes (containing the primes at which $E$ and $\Pi$ are ramified), and let $\mathrm{Spl}_{E/\mathbb{Q}}$  be the set of rational primes splitting in $E$. Define $$\mathcal{R} = \{ \mathfrak{m} \subset \mathcal{O}_E \enspace \mathrm{squarefree, \enspace coprime \enspace to \enspace} 6pcS : \enspace l \in \mathrm{Spl}_{E/\mathbb{Q}} \implies l \nmid \mathfrak{m}  \}. $$
	\begin{thm}[\citep{LSZ21}, Theorem 12.3.1]   Assume $\Pi$ is ordinary at $p$, then there exists a lattice $T=T_\mathfrak{P}(\Pi)^* \subset V$, and a collection of classes $$ \textbf{c}^\Pi_\mathfrak{m} \in H^1_{\mathrm{Iw}}(E[\mathfrak{m}p^\infty], T_\mathfrak{P}(\Pi)^*)$$ for all $\mathfrak{m} \in \mathcal{R}$ such that:
	\begin{enumerate}[i)]
		\item For $\mathfrak{m} \mid \mathfrak{n}$, we have $$\mathrm{norm}^\mathfrak{n}_\mathfrak{m}(\textbf{c}_\mathfrak{n}^\Pi) = \left( \prod_{w \mid \frac{\mathfrak{n}}{\mathfrak{m}}} P_w(\Pi,\mathrm{Frob}_w^{-1}) \right)\textbf{c}_\mathfrak{m}^\Pi.$$
		\item For an algebraic Hecke character $\eta$ of conductor dividing $\mathfrak{m}p^\infty$ and infinity type $(s,r)$, with $0 \leq r \leq a$ and $0 \leq s \leq b$, the image of $\textbf{c}^\Pi_\mathfrak{n}$ in $H^1_{\mathrm{Iw}}(E[\mathfrak{m}p^\infty], V(\eta^{-1}))$ is the étale realisation of a motivic cohomology class.
		\item For all $v \mid p$, the projection of $\mathrm{loc}_v(\textbf{c}_\mathfrak{m}^\Pi)$ to the group $H^1_{\mathrm{Iw}}(E_v \otimes_E E[\mathfrak{m}p^\infty], T/\mathcal{F}^1_v)$ is zero.
	\end{enumerate} \end{thm}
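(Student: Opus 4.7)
My plan is to construct the classes $\mathbf{c}^\Pi_\mathfrak{m}$ as étale realisations of motivic cohomology classes obtained by pushforward along a closed embedding of Shimura varieties $\iota: Y_H \hookrightarrow Y_G$. Here $Y_G$ is the Shimura variety attached to $G = \mathrm{GU}(2,1)$ and $Y_H$ is attached to a suitable one-dimensional Shimura datum $H \subset G$ (heuristically $H \simeq \mathrm{GU}(1,1) \times_{\mathbb{G}_m} \mathrm{U}(1)$, a union of modular curves with CM level structure). The source $Y_H$ carries motivic Beilinson--Kato/Beilinson--Flach Eisenstein classes; varying the level on $Y_H$ at primes dividing $\mathfrak{m}$ produces the family indexed by $\mathfrak{m} \in \mathcal{R}$, and organising these along the $p$-tower of ray class fields $E[\mathfrak{m} p^n]/E[\mathfrak{m}]$ yields the Iwasawa-theoretic classes. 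The restrictions in the definition of $\mathcal{R}$ (squarefree, coprime to $6pcS$, no split primes) guarantee that the tame level structures and their norm relations fit the standard Euler system framework.

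The core work is (i), the tame norm compatibility. For $\mathfrak{n} = \mathfrak{m} w$ with $w \mid l$ inert in $E$, the norm map on Iwasawa cohomology corresponds to a Hecke trace along a correspondence on $Y_H$, which, after projection to the $\Pi$-isotypic component, yields the action of a specific element of the local spherical Hecke algebra at $w$. Identifying this element with $P_w(\Pi, \mathrm{Frob}_w^{-1})$ reduces to a local branching/zeta-integral computation on $H(\mathbb{Q}_l) \subset G(\mathbb{Q}_l)$, evaluated against the spherical vector in $\pi_l$ and expressed in terms of the Satake parameters of $\Pi_w$. The exclusion of split rational primes from $\mathcal{R}$ is what ensures this local computation produces a single Euler factor rather than a product over places of $E$ above $l$, and I expect this branching computation to be the \emph{main obstacle}: it is where a delicate representation-theoretic identity is needed to match the correct polynomial $P_w(\Pi, X)$, and typically requires careful choice of test vectors and intertwining operators.

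Part (ii) then follows almost formally from the construction: under the hypotheses $0 \leq r \leq a$ and $0 \leq s \leq b$, the Hecke character twist sits in the interpolation range where cohomology with the relevant coefficient local system on $Y_G$ arises from an algebraic representation of $G$, and the pushforward of a motivic Eisenstein class on $Y_H$ remains motivic, with étale realisation the claimed class. Integrality of the lattice $T = T_\mathfrak{P}(\Pi)^*$ and of the Euler factor $P_w(\Pi, X)$ is guaranteed by Proposition~1.5. For (iii), the local condition at $p$, one exploits that $\iota$ is compatible at each $v \mid p$ with the ordinary filtration fixed in Section~1: concretely, the normal geometry of $Y_H$ inside $Y_G$ at an ordinary point matches the Panchishkin quotient $V/\mathcal{F}^1_v$, so that the Gysin/residue description of the pushforward forces the $v$-localisation of $\mathbf{c}^\Pi_\mathfrak{m}$ to factor through $H^1_{\mathrm{Iw}}(E_v \otimes_E E[\mathfrak{m}p^\infty], \mathcal{F}^1_v T)$, and its further projection to $T/\mathcal{F}^1_v$ therefore vanishes. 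This step is where the ordinarity hypothesis at every prime above $p$ is indispensable.
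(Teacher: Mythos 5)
This statement is not proved in the paper at all: it is imported verbatim from Loeffler--Skinner--Zerbes \citep{LSZ21}, Theorem 12.3.1, so the only meaningful comparison is with the construction in that reference. Your sketch does capture the correct architecture of that construction (pushforward of motivic Eisenstein classes along a sub-Shimura variety attached to a $\mathrm{GU}(1,1)$-type subgroup $H\subset G$, tame norm relations proved by local representation theory at inert primes, motivicity of the resulting classes, and a role for ordinarity at $p$), but as a proof it has genuine gaps rather than merely deferred routine steps.

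First, the $p$-direction of the statement is essentially absent from your argument. The classes live in $H^1_{\mathrm{Iw}}(E[\mathfrak{m}p^\infty],T_\mathfrak{P}(\Pi)^*)$, i.e.\ they must be norm-compatible up the full $p$-tower and integral with respect to a specific lattice $T$; the motivic pushforward classes at finite level are a priori neither. In \citep{LSZ21} this is exactly where the ordinarity hypothesis enters: one applies the $U_p$-ordinary projector and renormalises by unit $U_p$-eigenvalues to convert the ``wild'' norm relations (which involve Euler-type factors at $p$) into exact norm compatibility, and this same mechanism yields both the integral lattice classes and, downstream, the local condition (iii). Your proposal uses ordinarity only in a heuristic geometric way (``the normal geometry of $Y_H$ inside $Y_G$ at an ordinary point matches the Panchishkin quotient''), which is not an argument: the vanishing of the projection of $\mathrm{loc}_v(\mathbf{c}^\Pi_\mathfrak{m})$ to $H^1_{\mathrm{Iw}}(E_v\otimes_E E[\mathfrak{m}p^\infty],T/\mathcal{F}^1_v)$ is deduced in \citep{LSZ21} from the infinite-$p$-level, ordinary-projected construction of the classes, not from the normal bundle of the embedding. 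Second, you correctly flag the tame norm relation (i) as the main obstacle but then leave it entirely open; note also that \citep{LSZ21} does not establish it by explicit zeta-integral or test-vector computation, but by a multiplicity-one/cyclicity argument over the local spherical Hecke algebra (the abstract norm-relation machinery), so the key idea needed to ``match'' $P_w(\Pi,\mathrm{Frob}_w^{-1})$ is missing from the sketch. As it stands, the proposal is a plausible roadmap of the cited theorem, not a proof of it.
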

	\begin{rmk} First note that the cohomology of our Galois representations and $\mathbb{Z}_p$-lattices inside them are related by the appendix proposition \citep[Prop B.2.4]{Rub00}, which says that if $T$ is finitely generated over $\mathbb{Z}_p$ then $H^i(G,T)$ has no divisible elements and $$ H^i(G,T) \otimes \mathbb{Q}_p \cong H^i(G,V).$$
	We will use this result without further reference. \end{rmk}
	\begin{rmk} If $T$ is unramified outside a finite set of primes $S$ containing primes above $p$, we can consider this Euler system as a class in the Iwasawa cohomology group $H^1_{\mathrm{Iw}, S}(E[p^\infty], T)$ A result of Nekov\'a\v{r} (\citep{Nek06} 8.4.4.2) tells us that $$H^i_{\mathrm{Iw}, S}(E[p^\infty], T) = H^i(E^S/E, T \otimes \Lambda(-\bf{j})), $$ where $\Gamma=\mathrm{Gal}(E[p^\infty]/E)$, $\Lambda=\mathbb{Z}_p[[\Gamma]]$ is the Iwasawa algebra, $E^S$ is the maximal extension unramified outside $S$ and $\bf{j}$:$ \Gamma \rightarrow  \Lambda^*$ is the canonical character associated to $\Lambda$. We call $\mathbb{T}=T \otimes \Lambda(-\bf{j})$ the universal twist of $T$. We can lift our Panchishkin subrepresentations to the universal twist, using the notation $\mathcal{F}_v^i\mathbb{T}$ for the image of the $\mathcal{F}_v^iT$ in $\mathbb{T}$. 
	\par $\mathbb{T}$ considers all twists of $T$ by Hecke characters $\eta$ by evaluating $\bf{j}$ at $\tilde{\eta}$ where $\tilde{\eta}$ is the Galois character of $\Lambda$ arising from $\eta$ from class field theory. For an idele $x$, we define this by $\eta([x])=\tilde{\eta} (\bf{j}$($\phi_E([x]))$, where $\phi_E$ is the global Artin map defined by sending the image of a prime $v$ in the ideles to the arithmetic Frobenius at $v$. Every such $\tilde{\eta}$ gives rise to such a Hecke character $\eta$ by the universal property of $\bf{j}$ and by surjectivity of the Artin map. We will often switch between thinking of $\eta$ as a Hecke character and a $\Lambda$-character. \end{rmk}

	\section{Bounding Selmer groups}
	\par Our main objective in constructing this Euler system is to use it to bound Selmer groups. In particular we want to compute the dimension of $H^1_f(E,V)$, the Bloch--Kato Selmer group defined by the unramified local condition at primes $v \nmid p$ and the crystalline local condition when $v \mid p$. We want to bound this by the critical values of a complex $L$-function as predicted by the Bloch--Kato conjecture, but in order to do so must first develop some more general machinery to deal with Selmer groups.
	
	\subsection{The Euler system machine}
	\par The next immediate goal is to prove a version of the Iwasawa main conjecture for our $\mathrm{GU}(2,1)$-representation $V$. We do so by using results from \citep{BO17}, which generalise the standard Euler system machinery from \citep[\S5]{MR04} to work over any base number field $K$ and to allow more general Panchishkin local conditions. In order to apply these results, we need to place some conditions on $V$ in addition to ordinarity at all primes above $p$. 
	\par Let  $\Lambda = \mathbb{Z}_p[[\Gamma]]$ where $\Gamma=\mathrm{Gal}(E[p^\infty]/E)$ where $E[p^\infty]$ is the maximal extension of $E$ unramified outside $p$.  Then $\mathrm{Gal}(E[p^\infty]/E) \cong \mathbb{Z}_p^2 \times \Delta$ where $\Delta$ is a finite abelian torsion group. We assume $\Delta$ has order coprime to $p$, which follows from the sufficient conditions that $p > 3$ and $p \nmid \# \mathrm{Cl}(E)$ where $\mathrm{Cl}(E)$ is the ideal class group of $E$. We will later see that $p \geq 5$ is a very useful assumption for other reasons.
	
	\par We sketch the argument of the last statement here. By class field theory, $$ \mathrm{Gal}(E[p^\infty]/E) \cong E^\times \backslash \mathbb{A}_{E,f}^\times / \prod_{v \nmid p} \mathcal{O}_v^\times =: \mathrm{Cl}(E,p^\infty). $$
	Then we have the exact sequence $$ 1 \rightarrow \mathcal{O}_E^\times \rightarrow \prod_{v \mid p} \mathcal{O}_v^\times \rightarrow \mathrm{Cl}(E,p^\infty) \rightarrow \mathrm{Cl}(E) \rightarrow 1 .$$
	
	Since $p$ doesn't divide the class number  $h_E = |\mathrm{Cl}(E)|$, any $p$-torsion element in $\Delta$ will come from the image of $\prod_{v \mid p} \mathcal{O}_v^\times$. Then $p$ times this element will lie in $\mathcal{O}_E^\times$ by exactness. But $E_v$ doesn't contain $p$th roots of unity ($p \neq 2$ is sufficient when $p$ splits, and we also take $p \neq 3$ for inert $p$) so this is a contradiction. Note that this argument is independent of the splitting behaviour of $p$ in $E$. This gives us an explicit size $|\Delta|=\frac{(p-1)^2|\mathrm{Cl}(E)|}{|\mathcal{O}_E^\times|}$ when $p$ splits and $\frac{(p^2-1)|\mathrm{Cl}(E)|}{|\mathcal{O}_E^\times|}$ when $p$ is inert.
	
	\par For a prime $v \mid p$ such that $\Pi_v$ is unramified, $V_{\mathfrak{P}}(\Pi) \mid_{\mathrm{Gal}(\overline{E_v}/E_v)}$ is crystalline and the eigenvalues of the power of crystalline Frobenius $\phi^{[E_v:\mathbb{Q}_v]}$ on $\mathbb{D}_{\mathrm{cris}}(V_{\mathfrak{P}}(\Pi) \mid_{\mathrm{Gal}(\overline{E_v}/E_v)})$ are the reciprocal roots of $P_v(\Pi,qX)$ by \citep[Theorem 1.2]{BLGHT11}. Using this, we can get conditions for the vanishing of certain local cohomology groups when $p$ is split (and therefore $p=q$).
	\par Consider $\eta$ as a character of $\mathrm{Gal}(E[p^\infty]/E)$ by Remark 2.5, and denote its restrictions at each prime $v \mid p$ as $\eta_v:G_{\mathbb{Q}_p}^{ab} \rightarrow \mathcal{O}_L^\times$ where we assume $L$ contains $F_\mathfrak{P}$. By local class field theory, $$G_{\mathbb{Q}_p} \cong \hat{\mathbb{Q}}_p \cong \hat{\mathbb{Z}} \times \mathbb{Z}_p^\times$$ is the decomposition into the unramified part and inertia, such that the cyclotomic character is trivial on $\hat{\mathbb{Z}}$.
	\par By considering prime-to-$p$ torsion elements in the exact sequence before the lemma, the mod $p$ character $\bar{\eta}_v$ on $\Delta$ can be pulled back to a character of $\mathbb{F}_p^\times$ for each $v$. When $p$ is split, $\mathrm{Hom}(\mathbb{F}_p^\times, \bar{\mathbb{Q}}_p^\times)$ is canonically isomorphic to $\mathbb{Z}/(p-1)\mathbb{Z}$ and generated by a fixed Teichm\"uller character $\tau$, we can write $\bar{\eta_v}=\tau^{s_v}$ for $s_v \in \mathbb{Z}/(p-1)\mathbb{Z}$. When $p$ is inert we similarly can define $s_p \in \mathbb{Z}/(p^2-1)\mathbb{Z}$ depending on a choice of identity embedding and a choice of Teichm\"uller character. These the invariant we can use to computationally determine whether the cohomology of our mod $p$ representation twisted by the reduction of $\eta$ vanishes; they can be thought of as ``mod $p$ Hodge--Tate weights" of $\eta$ although this is technically inaccurate.
	
	\begin{lem} 
		\begin{enumerate}[i)] 
		\item Suppose $p$ splits in $E$ as $p = \wp \bar{\wp}$ with $\wp$ fixed in previous notation, and $\eta$ is a Hecke character as above such that $$s_\wp \mod p-1 \notin \{0, \enspace -1, \enspace 1+b, \enspace b, \enspace 2+a+b, \enspace 1+a+b \} \subset \mathbb{Z}/(p-1)\mathbb{Z}$$, $$s_{\bar{\wp}} \mod p-1 \notin \{0, \enspace -1, \enspace 1+a, \enspace a, \enspace 2+a+b, \enspace 1+a+b \} \subset \mathbb{Z}/(p-1)\mathbb{Z}$$.
		Then for any subquotient $X$ of $\overline{T(\eta^{-1})}$, $H^i(E_v, X)=0$ for $i=0,2$ and $v=\wp, \bar{\wp}$.
		\item Suppose $p$ is inert in $E$, and $\eta$ is a Hecke character as above such that $$s_p \mod p^2-1 \notin \{0, -1, 2+a+b, 1+a+b, 4+2a+2b, 3+2a+2b \} \subset \mathbb{Z}/(p^2-1)\mathbb{Z} $$.
		Then for any subquotient $X$ of $\overline{T(\eta^{-1})}$, $H^i(E_p, X)=0$ for $i=0,2$.
		\end{enumerate}
	\end{lem}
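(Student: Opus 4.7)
The proof goes by devissage to one-dimensional characters, combined with a mod-$p$ computation on local inertia, and local Tate duality to convert $H^2$ into $H^0$.

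By the ordinarity hypothesis and Remark 1.8, $T(\eta^{-1})|_{G_{E_v}}$ carries a full Galois-stable flag whose graded pieces are of the form $\chi_{j,v}\eta_v^{-1}$ for crystalline characters $\chi_{j,v}$ of $G_{E_v}$ ($j=1,2,3$). This filtration is inherited by the mod-$p$ reduction $\overline{T(\eta^{-1})}$ and by every subquotient $X$ thereof, so the graded pieces of $X$ form a subset of $\{\bar\chi_{j,v}\bar\eta_v^{-1}\}_j$. The long exact sequences in $G_{E_v}$-cohomology attached to this filtration then reduce the vanishing of $H^i(E_v,X)$ for $i=0,2$ to the analogous statement for each graded character individually.

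For the $H^0$ case, I would use the local class field theory decomposition described in the paragraph just above the statement: the inertia subgroup surjects onto $\mathbb{F}_p^\times$ in the split case and onto $\mathbb{F}_{p^2}^\times$ in the inert case, and the mod-$p$ restriction to inertia of any crystalline character of $G_{E_v}$ is a power of the Teichm\"uller character whose exponent is determined by its Hodge--Tate weight(s). Combined with the defining property of $s_v$ as the Teichm\"uller exponent of $\bar\eta_v$, the inertia exponent of $\bar\chi_{j,v}\bar\eta_v^{-1}$ equals $n_{j,v}-s_v$, where $n_{j,v}$ is the exponent attached to the $j$th graded piece of $V$. Since $H^0(E_v,\bar\psi)\neq 0$ forces $\bar\psi$ to be trivial and in particular trivial on inertia, non-vanishing of $H^0$ requires $s_v\equiv n_{j,v}\pmod{p-1}$ (resp.\ $\pmod{p^2-1}$). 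Reading off the three exponents $n_{j,v}$ from the Hodge numbers of Proposition 1.5 gives exactly the three ``unshifted'' forbidden values of $s_v$ in each case. For $H^2$, local Tate duality identifies $H^2(E_v,X)$ with the dual of $H^0(E_v,X^*(1))$; the same devissage applied to $X^*(1)$ yields graded characters $\bar\chi_{j,v}^{-1}\bar\eta_v\bar\chi_{\mathrm{cyc}}$, whose inertia exponents differ from those of the $H^0$ case by the cyclotomic contribution, producing the three ``shifted by $-1$'' forbidden values. The union of the two sets is exactly the six exclusions in the statement.

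The proof is thus essentially bookkeeping once the devissage is in place. The only subtle point is the inert case, where the two embeddings $E_p\hookrightarrow\overline{F_{\Pi,\mathfrak{P}}}$ both contribute to the mod-$p$ inertia exponent of a crystalline character of $G_{E_p}$; the standard classification of such characters must be used to package the two Hodge--Tate weights of each graded piece into the single integer $n_{j,v}$ modulo $p^2-1$ that appears in the forbidden set. This bookkeeping, together with tracking the sign convention of Hodge numbers versus Hodge--Tate weights from Section~1, is the main (mild) obstacle; no new input beyond the classification of crystalline characters of $G_{\mathbb{Q}_p}$ and of the unramified quadratic extension is required.
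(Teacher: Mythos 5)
Your proposal is correct and follows essentially the same route as the paper: d\'evissage through the ordinarity filtration of Remark 1.8 to reduce to the crystalline graded characters, the classification of crystalline characters (unramified twists of cyclotomic powers in the split case, Lubin--Tate powers in the inert case) to read off the mod-$p$ Teichm\"uller exponents against $s_v$, and local Tate duality to convert $H^2$ into an $H^0$ for the Tate twist, with subquotients handled because their graded pieces are a subset of those of $\overline{T(\eta^{-1})}$. The only difference is cosmetic: you phrase the obstruction via the inertia exponent, while the paper phrases it as a congruence $\eta_v \equiv \chi_i \bmod p$ forcing $s_v \equiv -r_i$, which amounts to the same bookkeeping.
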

	
	\begin{proof}
		We will first prove ($i$). By ordinarity, $H^0(E_v,\overline{T}) \neq 0 \implies H^0(E_v,\overline{\chi_i}) \neq 0$ for some $i$, where the $\chi_i$ are the crystalline characters of $E_v = \mathbb{Q}_p$ in Remark 1.8. By\citep[Prop B.4]{Con11}, all crystalline characters of $\mathrm{Gal}(\overline{\mathbb{Q}_p}/\mathbb{Q}_p)^{ab}$ are unramified twists of powers of the cyclotomic character. Say $\chi_i=\psi_i(r_i)$ where $r_i$ is the Hodge number of $\chi_i$ and $\psi_i$ is unramified, thus $\phi$ has eigenvalue $u_i=\psi_i(\mathrm{Frob}_v^{-1}) \in \mathbb{Z}_p^\times$ (with arithmetic Frobenius) on $\mathbb{D}_{\mathrm{cris}}(\psi_i)$.
		\par We can look at the following filtration of $V(\eta^{-1})$ which follows from Remark 1.8;
		\begin{equation*} V(\eta^{-1})|_{\mathrm{Gal}(\overline{E_v}/E_v)} \sim \begin{pmatrix}
				\chi_1 \cdot \eta_v^{-1} & * & * \\
				0 & \chi_2 \cdot \eta_v^{-1} & * \\
				0 & 0 & \chi_3 \cdot \eta_v^{-1} \\
			\end{pmatrix}.
		\end{equation*}
		\par If $H^0(E_v, \overline{T(\eta^{-1})})$ is non-zero for some $v|p$ then $\eta_v \equiv \chi_i$ mod $p$ for some $i$, which implies that $s_v \equiv -r_i \mod p-1$ where $r_i$ is the Hodge number of $\chi_i$ (noting that the cyclotomic character pulls back to the Teichm\"uller lift, and raising it to the power of $p-1$ is gives the trivial character mod $p$).
		\par For the $H^2$ group we follow the same argument and use local Tate duality, noting that $H^2(E_v,M)=0 \Leftrightarrow H^0(E_v,M^*(1))=0$ for all $\mathrm{Gal}(\overline{E_v}/E_v)$-modules $M$ that we work with. Moreover we can run this argument from the start for any subquotient $X$ of $\overline{T(\eta^{-1})}$  as its Hodge numbers of $X$ are a subset of the Hodge numbers of the whole representation, so we obtain the vanishing statement for $X$.
		\par For ($ii$) we run a similar argument on the same exact sequence for the invariant $s_p$. By \citep[Prop B.4]{Con11}, each crystalline characters $\chi_i$ of $\mathrm{Gal}(\overline{E_p}/E_p)^{ab}$ is of the form $$\chi_i = \chi_{\mathrm{LT}}^{a_i} \cdot\overline{\chi_{\mathrm{LT}}}^{b_i} \cdot \psi_i $$ where $\chi_{\mathrm{LT}}$ is the Lubin--Tate character attached to $E_p$ which has Hodge number 1 at the identity and 0 at the conjugate embedding, $\psi_i$ are unramified characters and $a_i$, $b_i \in \mathbb{Z}$. In fact the $a_i$ are the Hodge numbers at the identity embedding and the $b_i$ are the Hodge numbers at the conjugate embedding (and they are ordered as in Remark 1.8). We also use $\bar{\cdot}$ to denote the Galois conjugate map. 
		\par We can construct $\chi_{\mathrm{LT}}$ such that crystalline Frobenius acts as multiplication by $p^{-1}$ on the marked element $t_{\mathrm{LT}} \in \mathbb{D}_{\mathrm{cris}}(\chi_{\mathrm{LT}})$. We are avoiding the word eigenvalue as it is only a semilinear map in this instance, but it is true that $\phi^2$ is a linear map with eigenvalue $p^{-2}$ on $\mathbb{D}_{\mathrm{cris}}(\chi_{\mathrm{LT}})$. Thus $\chi_i$ acts on $t_{\mathrm{LT}}$ by multiplication by $p^{-(a_i+b_i)} u_i$ for some $u_i \in \mathcal{O}_{E_p}^\times$. We now obtain the result of $(ii)$ by computing the $a_i+b_i$ and repeating the argument of $(i)$.
	\end{proof}

	\par Henceforth we will assume that $\eta$ is such that the hypothesis of Lemma 3.1 holds, and so the corresponding degree 0 and 2 cohomology groups vanish.
	
	\begin{rmk} Note that the number of mod $p$ characters on $\Lambda$ is indexed by $\Delta$ which has size $\delta = \frac{(p-1)^2|\mathrm{Cl}(E)|}{|\mathcal{O}_E^\times|}=O(p^2)$, and the number of bad characters mod $p$ is bounded independently of $p$ by the lemma. Thus as $p$ gets large, we can guarantee the existence of $\eta$ satisfying the hypothesis. \end{rmk}
		
	
	\par Having assumed $p \nmid \delta$ we have a decomposition of the Iwasawa algebra $$ \Lambda \cong \bigoplus_{\bar{\eta}} \Lambda_{\bar{\eta}} $$ where $\bar{\eta}$ ranges over the characters of $\Delta$. This also gives us a decomposition of $\Lambda$-modules into a direct sum of $\Lambda(\bar{\eta})$-modules. For $\mathbb{T}$ we will denote this by $\mathbb{T}=\bigoplus_{\bar{\eta}} \mathbb{T}_{\bar{\eta}}$. We have maximal ideals $\mathfrak{m}_{\bar{\eta}}$ of $\Lambda$, and for each $\bar{\eta}$ we set $\overline{\mathbb{T}}_{\bar{\eta}}=\mathbb{T}/\mathfrak{m}_{\bar{\eta}}$. The above vanishing results tell us that $H^j(E_v, \overline{\mathbb{T}}_{\bar{\eta}})=H^j(E_v, \mathbb{T}) = 0$ for each $\bar{\eta}$, each $v \mid p$ and $j=0,2$.

	\par We further need to assume some hypotheses known collectively as ``big image" criteria. For our choice of $\eta$:
	\begin{itemize}
		\item Either $p \geq 5$ or $\mathrm{Hom}_{\mathbb{F}_p[[G_E]]}(\overline{\mathbb{T}}_{\bar{\eta}},\overline{\mathbb{T}}_{\bar{\eta}}^*(1)) = 0$, 
		\item $\overline{\mathbb{T}}_{\bar{\eta}}$ is absolutely irreducible as a $G_E$-module,
		\item There is an element $\tau \in G_E$ such that $\mathbb{T}/(\tau-1)\mathbb{T}$ is free of rank 1 and $\tau$ acts trivially on $p$-power roots of unity.
	\end{itemize}
	These are expected to hold for all but finitely many $p$ for any non-endoscopic $\Pi$ (similar results have already been proven for $\mathrm{GL_2}$ and $\mathrm{GSp}_4$, e.g.\ see \citep{DZ20}) so these are fairly safe assumptions.

	\par Now we will develop some notation for Selmer groups which we will need to state the first main result. We define a ``rank 1" local condition as follows: let $\mathcal{F}_+$ be the unramified local condition at primes $v \nmid p$ and the image of the rank 1 Panchishkin subrepresentation at primes above $p$ (as in Figure 2 region $\Sigma^{(2)}$). Precisely,
	$$
		H^1_{\mathcal{F}_+}(E_v, \mathbb{T}) = 
		\begin{cases}
			H^1_{\mathrm{ur}}(E_v, \mathbb{T}) \enspace \mathrm{if \enspace v \nmid p} \\
			\mathrm{im}(H^1(E_v, \mathcal{F}_v^1\mathbb{T}) \rightarrow H^1(E_v,\mathbb{T})) \enspace \mathrm{if \enspace v \mid p}.
		\end{cases} $$
	Then define $H^1_{\mathcal{F}_+}(E,\mathbb{T})$ as the set of cohomology classes lying locally in $H^1_{\mathcal{F}_+}(E_v,\mathbb{T})$ for each $v$. We also define the $\mathcal{F}^*_+$ as the dual set of local conditions on the representation $\mathbb{T}^*(1)$ (with respect to local Tate duality). We can define such Selmer groups for $\mathbb{T}_{\bar{\eta}}$ similarly.  
	\par We can now formulate one divisibility in the rank 1 Iwasawa main conjecture, which we will prove below under the assumptions we have listed throughout the paper, in particular Lemma 3.1. In the case of inert $p$ we still assume that $\eta$ is such that the relevant $H^0$ and $H^2$ groups vanish, but we no longer have a tidy condition on $\eta$.
	
	\par Given a finitely generated torsion module $M$ over a Noetherian Krull domain $R$, we can define its characteristic ideal as follows; the structure theorem of Iwasawa theory (e.g.\ see \citep[\S2]{BBL14}) tells us that there are pseudo-null $R$-modules $P$, $Q$ (i.e.\ the localisations of $P$ and $Q$ at all height 1 primes of $R$ are zero) such that the following sequence is exact for some $n \in \mathbb{N}$ and some height 1 primes $\mathfrak{p}_i$ of $R$ and integers $e_i$; $$ 0 \rightarrow P \rightarrow M \rightarrow \prod_{i=1}^n R/\mathfrak{p}_i^{e_i} \rightarrow Q \rightarrow 0 .$$
	Then we define the characteristic ideal of $M$ (over $R$) by $$ \mathrm{char}_R(M) = \prod_{i=1}^n \mathfrak{p}_i^{e_i} $$ and we will say $\mathrm{char}_R(M)=0$ when $M$ is non-torsion. Using our Euler system and an adaptation of classical Euler system machinery, we develop a divisibility of characteristic ideals over the Iwasawa algebra $\Lambda$, which is part of the Iwasawa main conjecture for GU(2,1). We will denote the projection maps onto each component by $e_{\bar{\eta}}$. Since $p \nmid |\Delta|$, this commutes with taking cohomology.

	\begin{thm} Assume that $V$ is ordinary at $p$, satisfies the big image criteria and $\eta$ is a Hecke character satisfying the vanishing conditions of Lemma 3.1. Suppose further that the bottom class of the Euler system satisfies $e_{\bar{\eta}} c^\Pi_1 \neq 0$. Then the following statements hold:
	\begin{enumerate}[i)]
		\item $e_{\bar{\eta}} H^1_{\mathcal{F}^*_+}(E,\mathbb{T}^*(1))^\vee$ is $\Lambda_{\bar{\eta}}$-torsion,
		\item $e_{\bar{\eta}} H^1_{\mathcal{F}_+}(E,\mathbb{T})$ is torsion-free of $\Lambda_{\bar{\eta}}$-rank 1,
		\item $\mathrm{char}_{\Lambda_{\bar{\eta}}}\left( e_{\bar{\eta}} H^1_{\mathcal{F}^*_+}(E,\mathbb{T}^*(1))^\vee  \right) \mid \mathrm{char}_{\Lambda_{\bar{\eta}}}\left( \dfrac{e_{\bar{\eta}} H^1_{\mathcal{F}_+}(E,\mathbb{T})}{ e_{\bar{\eta}} c^\Pi} \right). $
	\end{enumerate} \end{thm}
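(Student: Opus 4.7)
The plan is to invoke the Euler system machinery of \citep{BO17}, which extends the rank-$1$ Mazur--Rubin theory of \citep{MR04} to a general number field base and to local conditions of Panchishkin type. Its input is precisely a collection of classes satisfying the Euler system norm relations for a suitable set of auxiliary primes, together with big-image and local vanishing hypotheses, and its output, in the ``core rank one'' setup, is a statement of exactly the form of (i)--(iii). The strategy is therefore to take $\textbf{c}^\Pi$ of Theorem~2.3 as input, verify the hypotheses of \citep{BO17}, and read off the three conclusions.

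First I would cut the problem into $\bar\eta$-isotypic components. Since $p \nmid |\Delta|$, each $\Lambda_{\bar\eta}$ is a regular local ring (finite \'etale over $\mathbb{Z}_p[[T_1,T_2]]$ after enlarging scalars), so the structure theory for finitely generated torsion modules applies and characteristic ideals behave well; moreover $e_{\bar\eta}$ is exact and commutes with cohomology, so every step can be carried out componentwise. I would then verify the input hypotheses on $\mathbb{T}_{\bar\eta}$: the three bulleted big-image assumptions are exactly those required by \citep[\S3]{BO17}; the local condition $\mathcal{F}_+$ is of Panchishkin type because the rank-$1$ Panchishkin subspace of Figure~2 region $\Sigma^{(2)}$ coincides with $\mathcal{F}^1_v$ at both primes above $p$; and Lemma~3.1 provides the vanishing of the local $H^0$ and $H^2$ groups needed to check that $\mathcal{F}_+$ is a \emph{cartesian} local condition in the sense of Mazur--Rubin and that it propagates correctly to the mod $\mathfrak{m}_{\bar\eta}$ reduction.

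With the hypotheses verified, the conclusion comes in three stages. The assumption $e_{\bar\eta} \textbf{c}^\Pi_1 \neq 0$ exhibits a non-torsion element of $e_{\bar\eta} H^1_{\mathcal{F}_+}(E,\mathbb{T})$, giving the lower bound rank $\geq 1$ in (ii). The Kolyvagin derivative classes built from $\{\textbf{c}^\Pi_{\mathfrak{m}}\}_{\mathfrak{m}\in\mathcal{R}}$ produce, via the standard \v{C}ebotarev argument, enough annihilators of $e_{\bar\eta} H^1_{\mathcal{F}^*_+}(E,\mathbb{T}^*(1))^\vee$ to force it to be $\Lambda_{\bar\eta}$-torsion; this gives (i). Combining (i) with Poitou--Tate duality and the global Euler characteristic formula then pins down the $\Lambda_{\bar\eta}$-rank of $e_{\bar\eta} H^1_{\mathcal{F}_+}(E,\mathbb{T})$ as exactly $1$, and torsion-freeness follows from the vanishing $H^0(E,\overline{\mathbb{T}}_{\bar\eta})=0$ supplied by absolute irreducibility, completing (ii). Finally, measuring the index of the Euler system class in $e_{\bar\eta} H^1_{\mathcal{F}_+}(E,\mathbb{T})$ at each height-one prime, rather than merely tracking vanishing, upgrades the annihilation argument to the divisibility of characteristic ideals (iii).

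The main obstacle I expect is twofold. First, Theorem~2.3 only provides Euler system classes for $\mathfrak{m}$ in the restricted set $\mathcal{R}$, namely squarefree ideals coprime to $6pcS$ whose prime factors are \emph{inert} in $E/\mathbb{Q}$, whereas a naive \v{C}ebotarev derivation would like to range over all squarefree products of Kolyvagin primes. One has to check that enough admissible Kolyvagin primes survive this restriction for the derivative classes to exhaust the required annihilators; this is precisely the point at which the existence of $\tau \in G_E$ with $\mathbb{T}/(\tau-1)\mathbb{T}$ free of rank $1$ and $\tau$ acting trivially on $p$-power roots of unity enters essentially. Second, one has to verify carefully that the mod $p$ vanishing of Lemma~3.1 for $\overline{T(\eta^{-1})}$ transfers to the Iwasawa-theoretic statements needed by the machine, and that the image local condition arising from $\mathcal{F}^1_v$ is compatible with its mod $\mathfrak{m}_{\bar\eta}$ specialisation in the sense of cartesian local conditions; the inert case would require a genuinely separate input here, and Lemma~3.1(ii) is tailored precisely to supply it.
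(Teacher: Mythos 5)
Your proposal takes essentially the same route as the paper: the paper's entire proof is to apply \citep[Theorem 3.6]{BO17} with $R=\Lambda_{\bar{\eta}}\cong\mathbb{Z}_p[[X_1,X_2]]$, i.e.\ exactly your strategy of feeding the classes $\textbf{c}^\Pi_{\mathfrak{m}}$ of Theorem~2.3 into that machine after checking the big-image hypotheses, the Panchishkin local condition $\mathcal{F}_+$, and the local vanishing from Lemma~3.1. Your extra discussion of the internal Kolyvagin-derivative and \v{C}ebotarev mechanics (and of the restricted prime set $\mathcal{R}$) is detail the paper delegates entirely to \citep{BO17}, but the argument is the same.
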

	\begin{proof}
		Apply \citep[Theorem 3.6]{BO17} with $R=\Lambda_{\bar{\eta}}$, which is isomorphic to $\mathbb{Z}_p[[X_1,X_2]]$.
	\end{proof}

	\subsection{Selmer complexes}
	\par Let S be a finite set of primes of $E$ containing all primes above $p$, and $E^S$ defined as before. Denote $G_{E,S} = \mathrm{Gal}(E^S/E)$. Let $M$ be a finitely generated module of a ring $R$ with a continuous action of $G_{E,S}$. \begin{defn} A Selmer structure $\Delta = (\Delta_v)_{v \in S}$ for $M$ is a collection of homomorphisms $$\Delta_v: U_v^+ \rightarrow C^\bullet(E_v,M)$$ from a complex $U_v^+$ into the complex of continuous $M$-valued cochains. \end{defn} Following work of \citep{Nek06}, summarised in \citep[\S11.2]{KLZ17} we can define the associated Selmer complex $\widetilde{\mathrm{R\Gamma}}(\mathcal{O}_{E,S},M;\Delta)$. We can compute the cohomology of this Selmer complex, which we will denote by $\widetilde{H}^i(\mathcal{O}_{E,S},M;\Delta)$. We will mainly use this for $i=1,2$ when $M$ is a twist of $T$, the $\mathbb{Z}_p$-lattice inside $V=V_\mathfrak{P}(\Pi)^*$.
	\par A Selmer structure is called simple if for all $v$ the map $$i_v: H^i(U_v^+) \rightarrow H^i(E_v,M) $$ is an isomorphism for $i=0$, injection for $i=1$ and zero for $i=2$. In this case, the local condition is determined by the subspace $i(H^1(U_v^+)) \subseteq H^1(E_v,M)$.
	\par We define three Selmer structures which we will use; first define the Bloch--Kato structure $\Delta^{BK}$ by the saturation of the unramified condition at each $v \nmid p$ and by the crystalline local condition for $v \mid p$. We also define $\Delta^1$ by the unramified local condition for $v \nmid p$ and above $p$ by the inclusions $$C^\bullet(E_v, T^+) \hookrightarrow C^\bullet(E_v,T),$$ where $T^+$ is a $\mathbb{Z}_p$-lattice in the Panchishkin subrepresentation of $V$ in the region $\Sigma^{(2)}$. We similarly define $\Delta^0$ for the region $\Sigma^{(0)}$. Note that these are both simple local conditions by the vanishing of local $H^0$ groups assumed in \S3.1; as the result shows every $G_{E_v}$-subquotient of $\overline{\mathbb{T}}$ has no non-trivial $G_{E_v}$ invariant vectors and the required results for global cohomology groups follow. Using \citep[Prop 11.2.9]{KLZ17}, we see that for each of these simple structures,
	$$ \widetilde{H}^0(\mathcal{O}_{E,S},M;\Delta^?) = H^0(\mathcal{O}_{E,S},M)$$ 
	$$\widetilde{H}^1(\mathcal{O}_{E,S},M;\Delta^?) = H^1_{\Delta^?}(\mathcal{O}_{E,S},M)$$
	$$\widetilde{H}^2(\mathcal{O}_{E,S},M;\Delta^?) = H^1_{\Delta^?*}(\mathcal{O}_{E,S},M^*(1))^\vee $$
	where $H^i(\mathcal{O}_{E,S},-)$ is cohomology of $G_{E,S}$ modules, $? \in \{0,1,\mathrm{BK} \}$. Note that $H^1_{\mathrm{BK}}$ denotes the Bloch--Kato Selmer group, also denoted $H^1_f$ in many references and $H^1_\Delta$ denotes the usual Selmer group defined by the simple condition $\Delta$.
	Thus we can reformulate the last part of Theorem 3.3 as; 
	$$ \mathrm{char}_{\Lambda_{\bar{\eta}}} \left( e_{\bar{\eta}} \widetilde{H}^2(\mathcal{O}_{E,S},\mathbb{T};\Delta^1) \right) \bigm\vert \mathrm{char}_{\Lambda_{\bar{\eta}}} \left( \dfrac{e_{\bar{\eta}} \widetilde{H}^1(\mathcal{O}_{E,S},\mathbb{T};\Delta^1)}{ e_{\bar{\eta}}c^\Pi_1} \right). $$
	
	\subsection{Descending to integral level}
	\par We will develop a useful base change property of Selmer complexes, then by following the structure of arguments in \citep[\S11]{KLZ17} we can apply Theorem 3.3 by descent to get a bound on the size of the Selmer group. First we need a technical descent lemma; for the following we will take $\mathcal{O}=\mathcal{O}_{F_\Pi}$, so that everything we work with has the structure of an $\mathcal{O}$-module.
	\begin{lem} Let $R$ be a noetherian Krull domain, and $S=R[[T]]$. Let $C$ be a perfect complex of $S$-modules supported in degrees \{0,1,2\} such that $H^0(C)$ and $H^2(C)$ are $S$-torsion and there exists $z \in H^1(C)$ such that $H^1(C)/z$ is $S$-torsion. Then the formation of the fractional ideal $$\mathcal{I}= \dfrac{\mathrm{char}_S \left(H^1(C)/z \right)}{\mathrm{char}_S \left(H^2(C) \right)\mathrm{char}_S \left(H^0(C) \right)} $$ commutes with derived base change. That is, if $\tau:S \rightarrow R$ is a character and $\ker(\tau)$ is not in the support of the modules $H^0(C)$, $H^2(C)$ or $H^1(C)/z$, 
	$$\tau(\mathcal{I})= \dfrac{\mathrm{char}_R ( H^1(C^\prime )/\tau(z)) }{\mathrm{char}_R ( H^2(C^\prime ))\mathrm{char}_R ( H^0(C^\prime ) ) } $$
	where $C^\prime= C \otimes_{S, \tau} R$. \end{lem}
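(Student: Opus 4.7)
The plan is to cone off the class $z$ so that all cohomology becomes $S$-torsion, then invoke the standard base-change identity for alternating products of characteristic ideals of a perfect complex.

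First I would form $C_z = \mathrm{Cone}(\phi_z)$, where $\phi_z : S[-1] \to C$ is the morphism in $D(S)$ corresponding to $z \in H^1(C) = \mathrm{Hom}_{D(S)}(S[-1], C)$. Since $H^1(C)/z$ is $S$-torsion and $S$ is a domain, the map $S \to H^1(C)$ is injective, and the long exact sequence of the triangle $S[-1] \to C \to C_z$ yields
$$ H^0(C_z) = H^0(C), \quad H^1(C_z) = H^1(C)/z, \quad H^2(C_z) = H^2(C). $$
All three are $S$-torsion, so
$$ \mathcal{I} = \prod_{i=0}^{2} \mathrm{char}_S\bigl(H^i(C_z)\bigr)^{(-1)^{i+1}}. $$
Because cones commute with derived tensor products, $C_z \otimes_S^L R \cong (C')_{\tau(z)}$ in $D(R)$, and the analogous alternating product over $R$ will recover the right-hand side of the claim.

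Next I would exploit that $R = S/(f)$ admits the length-one resolution $0 \to S \xrightarrow{f} S \to R \to 0$, where $f \in S$ generates $\ker\tau$. The Tor spectral sequence then collapses at $E_2$ and produces short exact sequences
$$ 0 \longrightarrow H^i(C_z)/f \longrightarrow H^i(C_z \otimes_S^L R) \longrightarrow H^{i+1}(C_z)[f] \longrightarrow 0 $$
for each $i$. Multiplicativity of characteristic ideals in short exact sequences reduces the task to the module-theoretic identity
$$ \mathrm{char}_R(M/fM) \cdot \mathrm{char}_R(M[f])^{-1} = \tau\bigl(\mathrm{char}_S(M)\bigr), $$
valid for any finitely generated $S$-torsion module $M$ with $\ker\tau \notin \mathrm{Supp}_S(M)$. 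This I would verify on the pseudo-isomorphism class of $M$: on elementary divisors $S/\mathfrak{p}^e$ with $\mathfrak{p} \neq (f)$, the element $f$ is a non-zero divisor modulo $\mathfrak{p}^e$, so $M[f] = 0$ and $S/(\mathfrak{p}^e, f)$ has $R$-characteristic $\tau(\mathfrak{p})^e$, while the pseudo-null summands cancel on both sides of the identity.

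Feeding this into the alternating product, the $M_i[f]$ contributions arising in consecutive cohomological degrees of $C_z \otimes_S^L R$ telescope and cancel, leaving $\tau(\mathcal{I})$ as required. The main obstacle will be the careful bookkeeping across the four degrees $\{-1,0,1,2\}$ that $H^i(C_z \otimes_S^L R)$ occupies, together with verifying that the cone construction is compatible with the Tor long exact sequence at the level of the specific element $\tau(z)$ appearing in the statement; the hypothesis $\ker\tau \notin \mathrm{Supp}(H^1(C)/z)$ plays the essential role here, ensuring $\tau(z)$ remains non-torsion in $H^1(C')$ so that $H^1(C')/\tau(z)$ has the expected $R$-characteristic ideal.
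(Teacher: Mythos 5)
Your proposal is essentially the paper's own argument: you specialise along the principal generator $f$ of $\ker\tau$, use the multiplication-by-$f$ (universal-coefficient) sequences relating $H^i(C)$ and $H^i(C')$, and reduce everything to the identity $\mathrm{char}_R(M/fM)=\mathrm{char}_R(M[f])\,\tau(\mathrm{char}_S(M))$, which the paper simply cites as \citep[Theorem 1.1]{BBL14} rather than reproving via the structure theorem as you sketch. Coning off $z$ is just a tidier packaging of what the paper does by hand --- extracting three short exact sequences from the long exact sequence of $0\to C\xrightarrow{\times f}C\to C'\to 0$ and quotienting the $H^1$-sequence by $z$ --- and the alternating-product telescoping you describe recovers exactly the paper's combination of those sequences.
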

	\begin{proof} Let $\mathfrak{p} = \ker \tau$, which is a height 1 prime of $S$ by construction and thus has a principal generator $a$. We let $M_0=H^0(C)$, $M_2=H^2(C)$ and $M_1=H^1(C)/z$ and analogously define $M^\prime_0=H^0(C^\prime)$, $M^\prime_2 = H^2(C^\prime)$ and $M_1^\prime=H^1(C^\prime)/\tau(z)$.
	\par Consider the exact sequence $$0 \rightarrow C \xrightarrow{\times a} C \rightarrow C/aC \rightarrow 0.$$ 
	Notice that $C/aC = C \otimes S/\mathfrak{p} = C \otimes S/\ker \tau \cong C \otimes_\tau R = C^\prime$ so this induces a long exact sequence of cohomology of $R$-modules:
	\[\begin{tikzcd}
		0 & {H^0(C)} & {H^0(C)} & {H^0(C^\prime)} \\
		& {H^1(C)} & {H^1(C)} & {H^1(C^\prime)} \\
		{} & {H^2(C)} & {H^2(C)} & {H^2(C^\prime)} & 0
		\arrow["{\times a}"', from=2-2, to=2-3]
		\arrow[from=2-3, to=2-4]
		\arrow["{\times a}"', from=3-2, to=3-3]
		\arrow[from=3-3, to=3-4]
		\arrow[from=3-4, to=3-5]
		\arrow[from=2-4, to=3-2]
		\arrow[from=1-1, to=1-2]
		\arrow["{\times a}"', from=1-2, to=1-3]
		\arrow[from=1-3, to=1-4]
		\arrow[from=1-4, to=2-2]
	\end{tikzcd}\]
	From this we get three exact sequences:
	\begin{eqnarray}  
	0 \rightarrow M_2/aM_2 \xrightarrow{\tau} M_2^\prime \rightarrow 0 \\
	0 \rightarrow M_0/aM_0 \xrightarrow{\tau} M^\prime_0 \rightarrow H^1(C)[a] \rightarrow 0 \\
	0 \rightarrow H^1(C)/aH^1(C) \xrightarrow{\tau} H^1(C^\prime) \rightarrow M_2[a] \rightarrow 0
	\end{eqnarray}
	Note that by the restrictions on the support of $\ker \tau$, (1) and (2) are sequences of torsion $R$-modules. Since $z \in H^1(C)$ is non-torsion, $H^1(C)[a] = M_1[a]$ in (2). By taking the quotient by $z$ in the first term of (3), this becomes a sequence of torsion $R$-modules $$0 \rightarrow M_1/aM_1 \rightarrow M^\prime_1 \rightarrow M_2[a] \rightarrow 0 .$$
	Using \citep[Theorem 1.1]{BBL14} (a generalisation of \citep[14.15]{Kat04}), we have that for any $S$-module $M$ and any non-zero divisor $a \in S$,  
	$$ \mathrm{char}_R(M[a])\tau(\mathrm{char}_S(M))=\mathrm{char}_R(M/aM) $$
	Applying this to each $M_i$, and combining this with additivity of $\mathrm{char}_R$ in the three exact sequences above gives us the result. 

  	\end{proof}

	\begin{cor} Let $\eta:\Lambda \rightarrow \mathcal{O}^\times$ be a character satisfying the vanishing conditions of Lemma 3.1 such that $\ker \eta$ is not in the support of $e_{\bar{\eta}} \widetilde{H}^1(\mathcal{O}_{E,S},\mathbb{T}_{\bar{\eta}};\Delta^1)/e_{\bar{\eta}} c^\Pi$. Assume that $V$ is ordinary at $p$, satisfies the big image criteria. Suppose further that $c^\Pi_1$ has non-zero image in $H^1(\mathcal{O}_{E,S},T(\eta^{-1}))$. Then,
	\begin{enumerate}[i)]
		\item $\widetilde{H}^2(\mathcal{O}_{E,S},T(\eta^{-1});\Delta^1)$ is finite,
		\item $\widetilde{H}^1(\mathcal{O}_{E,S},T(\eta^{-1});\Delta^1)$ is free of $\mathcal{O}$-rank 1,
		\item $\#\left( \widetilde{H}^2(\mathcal{O}_{E,S},T(\eta^{-1});\Delta^1) \right) \leq \#\left( \dfrac{\widetilde{H}^1(\mathcal{O}_{E,S},T(\eta^{-1});\Delta^1)}{ \eta(c^\Pi_1)} \right)$.
	\end{enumerate}  \end{cor}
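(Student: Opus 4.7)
The plan is to descend the rank-1 Iwasawa main conjecture divisibility of Theorem 3.3 --- reformulated in terms of the Selmer complex at the end of \S3.2 --- to an inequality of orders of integral Selmer groups, by specialising through the character $\eta$, with Lemma 3.5 providing the key base-change compatibility. Let
\[
C = \widetilde{\mathrm{R\Gamma}}(\mathcal{O}_{E,S}, \mathbb{T}_{\bar\eta}; \Delta^1),
\]
a perfect complex of $\Lambda_{\bar\eta}$-modules concentrated in degrees $0,1,2$ (since $\Delta^1$ is a simple Selmer structure by the vanishing in Lemma 3.1, and $\mathbb{T}_{\bar\eta}$ is finitely generated). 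By Nekov\'a\v{r}'s base-change formalism one has
\[
C \otimes^{\mathrm{L}}_{\Lambda_{\bar\eta},\, \eta} \mathcal{O} \;\simeq\; \widetilde{\mathrm{R\Gamma}}(\mathcal{O}_{E,S}, T(\eta^{-1}); \Delta^1) =: C^\prime.
\]
The hypotheses of Lemma 3.5 are then satisfied: $H^0(C) = H^0(\mathcal{O}_{E,S}, \mathbb{T}_{\bar\eta})$ vanishes by absolute irreducibility of $\overline{\mathbb{T}}_{\bar\eta}$ and Nakayama; $H^2(C)$ is $\Lambda_{\bar\eta}$-torsion by Theorem 3.3 (i); and $H^1(C)/e_{\bar\eta} c^\Pi_1$ is $\Lambda_{\bar\eta}$-torsion with $e_{\bar\eta} c^\Pi_1 \neq 0$ by Theorem 3.3 (ii).

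Because $\Lambda_{\bar\eta} \cong \mathbb{Z}_p[[X_1, X_2]]$ and Lemma 3.5 handles only one-variable extensions, factor $\eta$ through an intermediate surjection $\Lambda_{\bar\eta} \twoheadrightarrow \Lambda_{\bar\eta}/\mathfrak{p}_1 \twoheadrightarrow \mathcal{O}$, where $\mathfrak{p}_1 \subset \ker \eta$ is a height-one prime chosen so that $\Lambda_{\bar\eta}/\mathfrak{p}_1$ is isomorphic to a one-variable power series ring over a finite extension of $\mathcal{O}$, and apply Lemma 3.5 twice. After the two descents, the three short exact sequences $(1)$--$(3)$ in its proof, combined with the support hypothesis, imply that $\widetilde{H}^2(\mathcal{O}_{E,S}, T(\eta^{-1}); \Delta^1)$ is a finitely generated torsion $\mathcal{O}$-module, hence finite, giving (i). For (ii), $\eta(c^\Pi_1) \neq 0$ together with $H^0(C^\prime) = 0$ and the Euler--Poincar\'e characteristic formula force $\widetilde{H}^1(C^\prime)$ to have $\mathcal{O}$-rank exactly $1$; torsion-freeness then follows because any torsion contribution would arise from the $[a]$-torsion terms in sequence $(3)$, which vanish under the support hypothesis, and freeness over the Dedekind ring $\mathcal{O}$ follows from rank-$1$ torsion-free-ness once the characteristic ideal is seen to be trivial. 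Finally, (iii) comes from applying the conclusion of Lemma 3.5 to the fractional ideal
\[
\mathcal{I} \;=\; \frac{\mathrm{char}_{\Lambda_{\bar\eta}}\bigl(H^1(C)/e_{\bar\eta} c^\Pi_1\bigr)}{\mathrm{char}_{\Lambda_{\bar\eta}}\bigl(H^2(C)\bigr)},
\]
whose specialisation $\eta(\mathcal{I})$ is the analogous ratio over $\mathcal{O}$; translating the divisibility of Theorem 3.3 into an inclusion of fractional ideals and then into an inequality of orders via the length formula for finite modules over $\mathcal{O}$ gives the bound.

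The main obstacle is the iterated descent: at each of the two stages we need the kernel of the relevant intermediate character to avoid the supports of $H^0(C)$, $H^2(C)$, and $H^1(C)/e_{\bar\eta} c^\Pi_1$, since otherwise the $[a]$-torsion and cokernel terms in sequences $(1)$--$(3)$ of Lemma 3.5 contribute non-trivially and the length computation fails. The support hypothesis in the statement of the corollary controls this at the final stage, and a generic choice of $\mathfrak{p}_1$ inside $\ker \eta$ propagates it to the intermediate stage. A related subtlety is upgrading torsion-freeness of $\widetilde{H}^1(C^\prime)$ to freeness, which requires combining the rank computation over $\mathrm{Frac}(\mathcal{O})$ with the absence of a non-trivial characteristic divisor --- a point that would need more care if $\mathcal{O}$ were replaced by a more general base ring.
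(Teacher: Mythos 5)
Your proposal is correct and follows essentially the same route as the paper: factor $\eta$ through an intermediate one-variable quotient of $\Lambda_{\bar\eta}\cong\mathbb{Z}_p[[X_1,X_2]]$, apply the base-change Lemma 3.5 twice with the inputs supplied by Theorem 3.3 and the vanishing results, and use the support hypothesis to convert the specialised divisibility of characteristic ideals into the finiteness and length bound. The only differences (your Euler-characteristic argument for part (ii) and the explicit choice of a height-one prime $\mathfrak{p}_1\subset\ker\eta$) are cosmetic refinements of the same argument.
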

	\begin{proof}
	We can decompose $\eta$ as $$\Lambda \xrightarrow{e_{\bar{\eta}} } \Lambda_{\bar{\eta}}  \xrightarrow{\eta_1} \Lambda^\prime  \xrightarrow{\eta_2} \mathcal{O} $$ where $\Lambda^\prime $ is isomorphic to $\mathcal{O}^\prime[[T]]$ for some $\mathcal{O}^\prime$ integral extension of $\mathbb{Z}_p$ inside $\mathcal{O}$. Apply Lemma 3.5  first to the complex $C=e_{\bar{\eta}} \widetilde{\mathrm{R\Gamma}}(\mathcal{O}_{E,S},\mathbb{T};\Delta^1)$ with $\tau=\eta_1$, $S=\Lambda$ and $R=\Lambda^\prime $ and second to the complex $C=\widetilde{\mathrm{R\Gamma}}(\mathcal{O}_{E,S},\mathbb{T} \otimes_{\Lambda, \eta_1} R;\Delta^1)$ with $\tau=\eta_2$, $S=\Lambda^\prime$ and $R=\mathcal{O}$. The assumption of vanishing of $H^0$ groups and torsion modules follow by previous results. The vanishing of $H^3$ groups follows from vanishing of $H^0$ and a generalised form of Poitou-Tate duality (see \citep[Theorem 11.2.7]{KLZ17}). By Theorem 3.3, $\mathcal{I} \subseteq \Lambda$ is a proper ideal and so $\eta(\mathcal{I}) \subseteq \mathcal{O}$ is too. Thus any prime not in the support of $\dfrac{\widetilde{H}^1(\mathcal{O}_{E,S},T(\eta^{-1});\Delta^1)}{ \eta(c^\Pi_1)}$ is not in the support of $\widetilde{H}^2(\mathcal{O}_{E,S},T(\eta^{-1});\Delta^1)$. We know they are finite, so this proves our result. \end{proof}
 
	\section{Motivic $p$-adic $L$-functions}
	
	\par Our next immediate goal is to start tying our results to some sort of $p$-adic $L$-function. The standard tool for doing this is to use a version of Perrin-Riou's regulator map to map the first class $c^\Pi_1$ of our Euler system to a $p$-adic measure, which we call a ``motivic $p$-adic $L$-function". The right map has been constructed in \citep[\S4]{LZ14}, and we by choosing a basis carefully we use it to define a Coleman map when $p$ is split in $E$:
	$$ \mathrm{Col}^\Pi: H^1(E_\wp, \mathbb{T}^1/\mathbb{T}^0) \longrightarrow \Lambda \otimes \mathbb{D}_{\mathrm{cris}}(T^1/T^0)  $$ where $\mathbb{T}^i$ and $T^i$ are the images of the rank $i$ Panchishkin local condition in $\mathbb{T}$ and $T$ for $i=0,1$ respectively. Note that our choice of $\wp \mid p$ at which we restrict is fixed as before; the unique prime of $E$ where local rank 0 and rank 1 conditions differ. The bottom Euler system class lies in the domain, so we can apply the Coleman map to it. We want to fix a $\mathbb{Z}_p$ basis $\{ \xi \}$ of the crystalline character $\mathbb{D}_{\mathrm{cris}}(T^1/T^0)$ in order to renormalise the Coleman map; this integral version is denoted $\mathrm{Col}^{\Pi,\xi}$ and has image contained in $\Lambda$. The choice of $\xi$ is not important now, but in later work the right $\xi$ may need to be chosen to compare our map to existing $p$-adic analytic constructions.

	\par Define $L_p^*(\Pi, 1 + \bf{j})$ := $ \mathrm{Col}^\Pi(c^\Pi)$ to be the ``motivic $p$-adic $L$-function''. Taking the integral version, $$\mathrm{Col}^{\Pi,\xi}(c^\Pi) = \frac{L_p^*(\Pi, 1 + \bf{j})}{\Omega} $$ for some non-zero scalar $\Omega$ which will depend on our choice of $\xi$. With some exact sequences, we can restate the main conjecture to include the ideal generated by this $L$-function.
	
	\begin{thm}
	Assume that $p$ splits in $E$ and the hypotheses of Theorem 3.3, i.e.\ $V$ is ordinary at $p$, satisfies the big image criteria and $\eta$ is a Hecke character satisfying the hypotheses of Lemma 3.1. Assume further that $e_{\bar{\eta}} L^*_p(\Pi, 1+ \textbf{j}) \neq 0$. Then,
		\begin{enumerate}[i)]
			\item $e_{\bar{\eta}} \widetilde{H}^1(\mathcal{O}_{E,S},\mathbb{T};\Delta^0) = 0 $,
			\item  $e_{\bar{\eta}} \widetilde{H}^2(\mathcal{O}_{E,S},\mathbb{T};\Delta^0)$ is torsion,
			\item $ \mathrm{char}_{\Lambda_{\bar{\eta}} } \left(e_{\bar{\eta}}  \widetilde{H}^2(\mathcal{O}_{E,S},\mathbb{T};\Delta^0)\right) \mid \frac{_{\bar{\eta}} L^*_p(\Pi, 1+ \bf{j})}{\Omega}$.
		\end{enumerate}
	\end{thm}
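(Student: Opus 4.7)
The plan is to compare the two Selmer complexes $\widetilde{\mathrm{R\Gamma}}(\mathcal{O}_{E,S},\mathbb{T};\Delta^0)$ and $\widetilde{\mathrm{R\Gamma}}(\mathcal{O}_{E,S},\mathbb{T};\Delta^1)$, which by construction agree at every place except $\wp$, where the local condition changes from $\mathcal{F}^2_\wp \mathbb{T}$ to $\mathcal{F}^1_\wp \mathbb{T}$. The short exact sequence
$$ 0 \to \mathcal{F}^2_\wp\mathbb{T} \to \mathcal{F}^1_\wp\mathbb{T} \to \mathbb{T}^1/\mathbb{T}^0 \to 0 $$
of $G_{E_\wp}$-modules induces an exact triangle of Selmer complexes
$$ \widetilde{\mathrm{R\Gamma}}(\mathcal{O}_{E,S},\mathbb{T};\Delta^0) \to \widetilde{\mathrm{R\Gamma}}(\mathcal{O}_{E,S},\mathbb{T};\Delta^1) \to \mathrm{R\Gamma}(E_\wp,\mathbb{T}^1/\mathbb{T}^0) \to [+1]. $$
Passing to cohomology, applying the idempotent $e_{\bar\eta}$, and using Lemma 3.1 to kill the local $H^0$ and $H^2$ terms at $\wp$, I obtain the five-term exact sequence
$$ 0 \to e_{\bar\eta}\widetilde{H}^1(\Delta^0) \to e_{\bar\eta}\widetilde{H}^1(\Delta^1) \xrightarrow{\mathrm{loc}_\wp} e_{\bar\eta}H^1(E_\wp,\mathbb{T}^1/\mathbb{T}^0) \to e_{\bar\eta}\widetilde{H}^2(\Delta^0) \to e_{\bar\eta}\widetilde{H}^2(\Delta^1) \to 0. $$

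Parts (i) and (ii) follow by rank considerations. Theorem 3.3 gives that $e_{\bar\eta}\widetilde{H}^1(\Delta^1)$ is torsion-free of $\Lambda_{\bar\eta}$-rank one, generated up to pseudo-null torsion by $c^\Pi_1$, and that $e_{\bar\eta}\widetilde{H}^2(\Delta^1)$ is torsion; the local Euler characteristic formula (again using Lemma 3.1) shows $e_{\bar\eta}H^1(E_\wp,\mathbb{T}^1/\mathbb{T}^0)$ has $\Lambda_{\bar\eta}$-rank one. The hypothesis $e_{\bar\eta}L_p^*(\Pi,1+\mathbf{j}) = e_{\bar\eta}\mathrm{Col}^{\Pi,\xi}(c^\Pi_1) \neq 0$ forces $\mathrm{loc}_\wp(c^\Pi_1)$ to be non-torsion, so $\mathrm{loc}_\wp$ has rank-one image. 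Its kernel $e_{\bar\eta}\widetilde{H}^1(\Delta^0)$ is then a torsion submodule of a torsion-free module, hence zero; and its cokernel is torsion, so $e_{\bar\eta}\widetilde{H}^2(\Delta^0)$ is sandwiched between torsion modules and is itself torsion.

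For (iii), I chase characteristic ideals through the sequence, which (after (i)) reads
$$ 0 \to e_{\bar\eta}\widetilde{H}^1(\Delta^1) \xrightarrow{\mathrm{loc}_\wp} e_{\bar\eta}H^1(E_\wp,\mathbb{T}^1/\mathbb{T}^0) \to e_{\bar\eta}\widetilde{H}^2(\Delta^0) \to e_{\bar\eta}\widetilde{H}^2(\Delta^1) \to 0. $$
Applying multiplicativity of $\mathrm{char}_{\Lambda_{\bar\eta}}$ on sequences of torsion modules obtained by quotienting the left-hand injection by $c^\Pi_1$, together with the divisibility in Theorem 3.3(iii), yields
$$ \mathrm{char}_{\Lambda_{\bar\eta}}\!\bigl(e_{\bar\eta}\widetilde{H}^2(\Delta^0)\bigr) \;\bigm|\; \mathrm{char}_{\Lambda_{\bar\eta}}\!\bigl(e_{\bar\eta}H^1(E_\wp,\mathbb{T}^1/\mathbb{T}^0)/\mathrm{loc}_\wp(c^\Pi_1)\bigr). $$
I then push this through the Coleman map: since $\mathrm{Col}^{\Pi,\xi}$ is injective on the $e_{\bar\eta}$-component (its kernel being $\Lambda_{\bar\eta}$-torsion and therefore invisible at height-one primes) and sends $\mathrm{loc}_\wp(c^\Pi_1)$ to $L_p^*(\Pi,1+\mathbf{j})/\Omega$, the quotient $e_{\bar\eta}H^1(E_\wp,\mathbb{T}^1/\mathbb{T}^0)/\mathrm{loc}_\wp(c^\Pi_1)$ embeds (up to pseudo-null) in $\Lambda_{\bar\eta}/(L_p^*/\Omega)$, whose characteristic ideal is $(e_{\bar\eta}L_p^*(\Pi,1+\mathbf{j})/\Omega)$. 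Chaining the two divisibilities gives (iii).

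The main obstacle is the final step through the Coleman map. Establishing the divisibility (as opposed to the sharper equality required by the full Iwasawa main conjecture) only needs injectivity of $\mathrm{Col}^{\Pi,\xi}$ on the $e_{\bar\eta}$-component, which follows from the construction in \citep{LZ14} once $\xi$ is fixed; any contribution from $\Lambda_{\bar\eta}/\mathrm{image}(\mathrm{Col}^{\Pi,\xi})$ strengthens, rather than weakens, the divisibility we seek. Converting this into equality would require the missing explicit reciprocity law flagged at the end of \S4, and is beyond the scope of this argument.
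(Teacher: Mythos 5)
Your argument is correct and follows essentially the same route as the paper: the same exact triangle comparing $\Delta^0$ and $\Delta^1$, the same use of Theorem 3.3 together with non-torsionness of $\mathrm{loc}_\wp(e_{\bar\eta}c^\Pi)$ for (i)--(ii), and the same passage through the Coleman map for (iii), where the paper invokes Remarks 8.2.4--8.2.5 of \citep{KLZ17} (injectivity plus pseudo-nullity of the cokernel) while you use only injectivity. One small caution: ``the kernel is $\Lambda$-torsion and therefore invisible at height-one primes'' is not a valid inference in general (torsion does not imply pseudo-null), but the kernel does vanish here because of the local $H^0$-vanishing guaranteed by Lemma 3.1, which is exactly the input the paper uses.
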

	\begin{proof}
	We have an exact triangle from the formation of Selmer complexes, written down in \citep[11.2.1]{KLZ17}:
	$$\widetilde{R\Gamma}(\mathcal{O}_{E,S}, \mathbb{T}; \Delta^0) \rightarrow \widetilde{R\Gamma}(\mathcal{O}_{E,S}, \mathbb{T}; \Delta^1) \rightarrow R\Gamma(E_\wp, \mathbb{T}^1/\mathbb{T}^0) \rightarrow \dots $$
	\par First note that $e_{\bar{\eta}}\widetilde{H}^1(\mathcal{O}_{E,S}, \mathbb{T}; \Delta^0)$ injects into $e_{\bar{\eta}}\widetilde{H}^1(\mathcal{O}_{E,S}, \mathbb{T}; \Delta^1)$ which is torsion free of rank 1 by Theorem 3.3. However, the latter has an element $e_{\bar{\eta}}c^\Pi$ whose image in $e_{\bar{\eta}}H^1(E_\wp, \mathbb{T}^1/\mathbb{T}^0)$ is non-torsion. This gives us part $(i)$.
	\par From the exact triangle we get the following exact sequence,
	$$ 0 \rightarrow \ \frac{e_{\bar{\eta}}\widetilde{H}^1(\mathcal{O}_{E,S}, \mathbb{T}; \Delta^1)}{ e_{\bar{\eta}} c^\Pi} \rightarrow \frac{e_{\bar{\eta}}H^1(E_\wp, \mathbb{T}^1/\mathbb{T}^0)}{ e_{\bar{\eta}}c^\Pi} \rightarrow e_{\bar{\eta}} \widetilde{H}^2(\mathcal{O}_{E,S}, \mathbb{T}; \Delta^0) \rightarrow e_{\bar{\eta}}\widetilde{H}^2(\mathcal{O}_{E,S}, \mathbb{T}; \Delta^1) $$
	using the above as well as vanishing of local $H^0$. The last map is actually surjective due to the vanishing assumptions of local $H^2$ groups. This gives us part $(ii)$ as the first two modules are torsion. Applying the divisibility of characteristic ideals from the rank 1 main conjecture, we get
	$$ \mathrm{char}_{\Lambda_{\bar{\eta}}} \left(e_{\bar{\eta}} \widetilde{H}^2(\mathcal{O}_{E,S}, \mathbb{T}; \Delta^0)\right) \mid \mathrm{char}_{\Lambda_{\bar{\eta}}} \left(\frac{e_{\bar{\eta}} H^1(E_\wp, \mathbb{T}^1/\mathbb{T}^0)}{ e_{\bar{\eta}} c^\Pi} \right). $$
	\par We also have an exact sequence induced by the Coleman map:
	$$ 0 \rightarrow \frac{e_{\bar{\eta}} H^1(E_\wp, \mathbb{T}^1/\mathbb{T}^0)}{ e_{\bar{\eta}} c^\Pi} \rightarrow \frac{\Lambda_{\bar{\eta}}}{e_{\bar{\eta}} \mathrm{Col}^{\Pi,\xi}(c^\Pi)} \rightarrow \mathrm{coker}(e_{\bar{\eta}} \circ \mathrm{Col}^{\Pi,\xi}) \rightarrow 0 .$$
	By Remarks 8.2.4 and 8.2.5 of \cite{KLZ17}, if we assume 1 is not an eigenvalue of crystalline Frobenius on $V(\eta^{-1})$ (which follows from the statement of Lemma 3.1) then $\mathrm{coker}(e_{\bar{\eta}} \circ \mathrm{Col}^{\Pi,\xi})$ is a pseudonull $\Lambda$-module, and so
	\begin{align*} 
		\mathrm{char}_{\Lambda_{\bar{\eta}}} \left(e_{\bar{\eta}} \widetilde{H}^2(\mathcal{O}_{E,S}, \mathbb{T}; \Delta^0)\right) &  \mid \enspace \mathrm{char}_{\Lambda_{\bar{\eta}}} \left( \frac{e_{\bar{\eta}}H^1(E_\wp, \mathbb{T}^1/\mathbb{T}^0)}{e_{\bar{\eta}} c^\Pi} \right) \hfill \\
	  	 & = \mathrm{char}_{\Lambda_{\bar{\eta}}} \left(\frac{e_{\bar{\eta}}H^1(E_\wp, \mathbb{T}^1/\mathbb{T}^0)}{e_{\bar{\eta}} c^\Pi} \right)\mathrm{char}_{\Lambda_{\bar{\eta}}} (\mathrm{coker}(e_{\bar{\eta}} \circ \mathrm{Col}^{\Pi,\xi})) \\ & = \enspace \left(\frac{e_{\bar{\eta}} L^*_p(\Pi, 1+ \bf{j})}{\Omega}\right). 
	\end{align*}
	Thus we have the required divisibility.
	\end{proof}
	
	\begin{rmk}
		When $p$ is inert, we don't have a rank 0 Panchishkin subrepresentation or a Coleman map as above. So in order to construct an analogue of the motivic $p$-adic $L$-function we will have to resort to other methods, which we will discuss in \S4.1.
	\end{rmk}

 	\par This has established a strong link between the Selmer groups we are considering and the $L$-function as a distribution, but we would still like to go further and gather information about special values of the $L$-function after specialising $\mathbb{T}$ by a character $\eta$ with all the conditions we have placed above. We will define $L^*_p(\Pi, 1+ \eta) := \eta(L^*_p(\Pi, 1+\textbf{j})) \in \mathcal{O}$ as the $L$-value at $\eta$. Similar to \S3.2, we want to prove a descended version of Theorem 4.1 at a finite level. 
 	
 	\begin{thm}
 	 Let $\eta:\Lambda \rightarrow \mathcal{O}^\times$ be a character satisfying the hypotheses of Lemma 3.1 such that $\ker \eta$ is not in the support of $L^*_p(\Pi, 1 + \textbf{j})$. Assume that $V$ is ordinary at $p$, satisfies the big image criteria. Then,
 		\begin{enumerate}[i)]
 			\item We have \begin{align*} \mathrm{rank}_\mathcal{O} \widetilde{H}^1(\mathcal{O}_{E,S},T(\eta^{-1});\Delta^0) & = \mathrm{rank}_\mathcal{O} \widetilde{H}^2(\mathcal{O}_{E,S},T(\eta^{-1});\Delta^0)\\
 				& \leq \mathrm{ord}_{\textbf{j} =\eta} L_p^*(\Pi, 1+\textbf{j}).
			\end{align*}
		\item If $L_p^*(\Pi, 1+\eta) \neq 0$, then we have $\widetilde{H}^1(\mathcal{O}_{E,S},T(\eta^{-1});\Delta^0)=0$, and $\widetilde{H}^2(\mathcal{O}_{E,S},T(\eta^{-1});\Delta^0)$ is a finite $\mathcal{O}$-module whose length is bounded above by $$v_\mathfrak{P}\left(\frac{L_p^*(\Pi, 1+\eta)}{\Omega}\right). $$
 				
		\end{enumerate}
 	\end{thm}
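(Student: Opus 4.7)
The plan is to mimic the descent argument of Corollary 3.6, substituting Theorem 4.1 for Theorem 3.3 as the input on the Iwasawa algebra. The key conceptual simplification here is that over $\Lambda_{\bar\eta}$ we now have $\widetilde{H}^1(\mathcal{O}_{E,S}, \mathbb{T}; \Delta^0) = 0$ outright by Theorem 4.1(i), rather than a torsion-free rank-one module with a distinguished Euler class, which will collapse the long exact sequences arising from base change to something cleaner than in the rank-one setting.

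First I would factor
$$\Lambda \xrightarrow{e_{\bar\eta}} \Lambda_{\bar\eta} \xrightarrow{\eta_1} \Lambda' \xrightarrow{\eta_2} \mathcal{O}$$
exactly as in Corollary 3.6, and set $C = e_{\bar\eta}\widetilde{R\Gamma}(\mathcal{O}_{E,S}, \mathbb{T}; \Delta^0)$. Then $H^0(C)$ is torsion (in fact finite, by the vanishing of local $H^0$ arranged in Lemma 3.1), $H^1(C)=0$, and $H^2(C)$ is $\Lambda_{\bar\eta}$-torsion with characteristic ideal dividing $(e_{\bar\eta}L^*_p(\Pi,1+\textbf{j})/\Omega)$ by parts (ii)--(iii) of Theorem 4.1. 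Applying the distinguished triangle $0 \to C \xrightarrow{\times a} C \to C' \to 0$ with $a$ a principal generator of the height-one prime $\ker\eta_1$ and taking cohomology, the vanishing $H^1(C)=0$ collapses the long exact sequence to the three identifications
$$H^0(C') \cong H^0(C)/aH^0(C), \qquad H^1(C') \cong H^2(C)[a], \qquad H^2(C') \cong H^2(C)/aH^2(C),$$
and the same triangle applied again for $\eta_2$ descends us to $\mathcal{O}$.

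For part (i), I would invoke the identity $\mathrm{rank}_R M[a] = \mathrm{rank}_R M/aM = \mathrm{ord}_{(a)} \mathrm{char}_S M$ valid for any torsion $S$-module $M$ and height-one prime $(a)$, exactly as used in the proof of Lemma 3.5 via \citep[Thm 1.1]{BBL14}. Applied to $H^2(C)$ and then iterated for the second descent step, this gives $\mathrm{rank}_\mathcal{O}\widetilde{H}^1(\mathcal{O}_{E,S},T(\eta^{-1});\Delta^0) = \mathrm{rank}_\mathcal{O}\widetilde{H}^2(\mathcal{O}_{E,S},T(\eta^{-1});\Delta^0)$, with common value bounded by the order of vanishing of $\mathrm{char}_{\Lambda_{\bar\eta}}(H^2(C))$ at $\ker\eta$, which in turn is bounded by $\mathrm{ord}_{\textbf{j}=\eta}L^*_p(\Pi,1+\textbf{j})$ by Theorem 4.1(iii).

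For part (ii), the hypothesis $L^*_p(\Pi,1+\eta)\neq 0$ together with Theorem 4.1(iii) forces $\ker\eta$ to lie outside the support of $H^2(C)$, so $H^2(C)[a]$ vanishes at each descent step; by the middle isomorphism above this gives $\widetilde{H}^1(\mathcal{O}_{E,S}, T(\eta^{-1}); \Delta^0) = 0$. The length of the finite $\mathcal{O}$-module $\widetilde{H}^2(\mathcal{O}_{E,S}, T(\eta^{-1}); \Delta^0)$ is then obtained by tracking characteristic ideals across both base-change steps and applying $v_\mathfrak{P}$ to the divisibility of Theorem 4.1(iii), yielding the bound $v_\mathfrak{P}(L^*_p(\Pi,1+\eta)/\Omega)$. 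The main obstacle will be the support and order-of-vanishing bookkeeping across the iterated descent through $\eta_1$ and $\eta_2$ --- essentially the content of Lemma 3.5, now in the simpler regime where $H^1$ of the initial complex already vanishes, obviating the need to quotient by an Euler class as in Corollary 3.6.
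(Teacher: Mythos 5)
Your plan of obtaining everything by descending Theorem 4.1 through $\Lambda \to \Lambda_{\bar\eta} \to \Lambda' \to \mathcal{O}$ has a genuine gap: the data you carry down (vanishing of $H^1$, torsionness of $H^2$, divisibility of characteristic ideals) is blind to pseudo-null submodules of $M := e_{\bar\eta}\widetilde{H}^2(\mathcal{O}_{E,S},\mathbb{T};\Delta^0)$, and these are exactly what survive base change as the torsion terms $M[a]$ and $(M/aM)[b]$ in your long exact sequences. Carrying out the two descent steps carefully (and using that for a torsion module $N$ over a two- or one-variable ring one has $\mathrm{rank}(N[x])=\mathrm{rank}(N/xN)$), one finds $\mathrm{rank}_\mathcal{O}\widetilde{H}^1(T(\eta^{-1});\Delta^0)-\mathrm{rank}_\mathcal{O}\widetilde{H}^2(T(\eta^{-1});\Delta^0)=\mathrm{rank}_\mathcal{O}\bigl(M[\ker\eta]\bigr)$, the rank of the largest submodule of $M$ on which $\Gamma$ acts through $\eta$. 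Nothing in Theorem 4.1 forces this to vanish: even when $\eta(L_p^*)\neq 0$, the divisibility of characteristic ideals only makes $M[a]$ pseudo-null over $\Lambda_{\bar\eta}$, not zero. So your derivation of the rank equality in (i) does not close; your claim in (ii) that ``$H^2(C)[a]$ vanishes at each descent step'' is unjustified for the same reason; and the same uncontrolled modules (the characteristic ideals of $M[a]$ and of $(M/aM)[b]$) enter your ``tracking of characteristic ideals'', so the naive length bound does not follow either. Two smaller points: the identity you invoke, $\mathrm{rank}_R M[a]=\mathrm{rank}_R M/aM=\mathrm{ord}_{(a)}\mathrm{char}_S M$, fails in its second equality (for $M=S/(a^e)$ both ranks are $1$ while the order is $e$); only the inequality $\leq$ holds, which is fortunately the direction you need. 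Also the identification $H^2(C')\cong H^2(C)/aH^2(C)$ needs vanishing of $H^3$ (Poitou--Tate, as in Corollary 3.6), and the ``collapse'' of the long exact sequence happens only at the first step, since $H^1$ of the intermediate complex is $M[a]$, not $0$.

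The paper supplies precisely the finite-level inputs your proposal lacks. For (i) it does not use Theorem 4.1(i) at all: it computes the Euler characteristic of $\widetilde{R\Gamma}(\mathcal{O}_{E,S},T(\eta^{-1});\Delta^0)$ via the exact triangle of \citep[11.2.2a]{KLZ17} and Tate's local and global Euler characteristic formulae, getting $\chi=-\mathrm{rank}(T^0_\wp)-\mathrm{rank}(T^0_{\bar\wp})+\mathrm{rank}(T)=-2-1+3=0$, which together with vanishing of $\widetilde{H}^0$ and $\widetilde{H}^3$ gives $\mathrm{rank}\,\widetilde{H}^1=\mathrm{rank}\,\widetilde{H}^2$ unconditionally. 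For (ii) it first notes that $\widetilde{H}^1(\mathcal{O}_{E,S},T(\eta^{-1});\Delta^0)$ is a \emph{free} $\mathcal{O}$-module (vanishing of mod-$p$ $H^0$ from Lemma 3.1 plus simplicity of $\Delta^0$), so rank $0$ forces it to vanish; only then does the descent of characteristic ideals become clean --- once $\widetilde{H}^1$ at finite level vanishes your error modules are forced to be zero, or one argues as the paper does by redoing the Corollary 3.6 fractional-ideal argument (Lemma 3.5 with $M_0=1$, $M_2=e_{\bar\eta}\widetilde{H}^2(\mathbb{T};\Delta^0)$, $M_1=\Lambda_{\bar\eta}/e_{\bar\eta}\mathrm{Col}^{\Pi,\xi}(c^\Pi)$), where the $M[a]$-type terms cancel between numerator and denominator. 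If you add the Euler characteristic computation and the freeness of $\widetilde{H}^1$, your descent bookkeeping can be repaired; without them the proposal does not prove (i) or (ii).
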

 	\begin{proof}
 		First we will compute the Euler characteristic $\chi$ of the complex $C=\widetilde{H}^\bullet(\mathcal{O}_{E,S},T(\eta^{-1});\Delta^0)$. By the exact triangle of \citep[11.2.2a]{KLZ17}, we compute this using Tate's local and global Euler characteristic formulae
 		\begin{eqnarray*} \chi(C) & = \sum_v\chi(H^\bullet(E_v, T/T^0)-\chi(H^\bullet(\mathcal{O}_{E,S},T)\\
 			& = - \mathrm{rank}_\mathcal{O}(T^0_\wp) - \mathrm{rank}_\mathcal{O}(T^0_{\bar{\wp}}) + \mathrm{rank}_\mathcal{O}(T)\\
 			& = -2 - 1 + 3 = 0
 		\end{eqnarray*}
 		where $T^0$ is the image of the rank 0 Panchishkin subrepresentation in $T$, so $T/T^0$ is defined only by local conditions at primes above $p$ (hence the sum of local Euler characteristics only has 2 non-zero terms).
 		\par By vanishing of global $H^3$ groups established in the proof of Corollary 3.6, this tells us that $$ \mathrm{rank}_\mathcal{O} \widetilde{H}^1(\mathcal{O}_{E,S},T(\eta^{-1});\Delta^0) = \mathrm{rank}_\mathcal{O} \widetilde{H}^2(\mathcal{O}_{E,S},T(\eta^{-1});\Delta^0). $$
 		Moreover by the vanishing of global $H^0$ groups, $H^1(\mathcal{O}_{E,S},T(\eta^{-1}))$ is a free $\mathcal{O}$-module. As $\Delta^0$ is a simple local condition, $\widetilde{H}^1(\mathcal{O}_{E,S},T(\eta^{-1});\Delta^0)$ is also free.
 		\par If $L_p^*(\Pi, 1+\textbf{j}) =0$ then it has infinite order of vanishing and the theorem is trivially true, so suppose not. By exact sequence (1) of Lemma 3.5 applied to our complex $C$ (and applied again to $C^\prime = C \otimes_{\Lambda, \eta} \Lambda^\prime$ using notation from Corollary 3.6), $\widetilde{H}^2(\mathcal{O}_{E,S},T(\eta^{-1});\Delta^0)$ is the maximal subquotient of $\widetilde{H}^2(\mathcal{O}_{E,S},\mathbb{T};\Delta^0)$ on which $\Gamma$ acts via $\eta$. Thus $$ \mathrm{rank}_\mathcal{O} \widetilde{H}^2(\mathcal{O}_{E,S},T(\eta^{-1});\Delta^0) = \mathrm{ord}_{\textbf{j}=\eta}\mathrm{char}_\Lambda( \widetilde{H}^2(\mathcal{O}_{E,S},\mathbb{T};\Delta^0)).  $$
 		By applying the divisibility in Theorem 4.1, we get the required inequality for $(i)$.
 		\par Now assume $L_p^*(\Pi, 1+\eta) \neq 0$, so the order of vanishing in part $(i)$ is 0. Then the $\widetilde{H}^i(\mathcal{O}_{E,S},T(\eta^{-1});\Delta^0)$ for $i=1,2$ are both finite $\mathcal{O}$-modules. For $i=1$ it is also free, and hence it is 0. To get a bound on the size of $\widetilde{H}^2$, we repeat part of the argument of Corollary 3.6 but with $M_0=1$, $M_2=e_{\bar{\eta}}\widetilde{H}^2(\mathcal{O}_{E,S},\mathbb{T};\Delta^0)$ and $M_1=\Lambda_{\bar{\eta}}/e_{\bar{\eta}}\mathrm{Col}^{\Pi, \xi}(c^\Pi)$; in particular, exact sequence (1) and the result from \citep{BBL14} combined with non-vanishing of $L^*(\Pi, 1+\textbf{j})$ under $\eta$ to give the descended divisibility $$\mathrm{char}_\mathcal{O}(\widetilde{H}^2(\mathcal{O}_{E,S},T(\eta^{-1});\Delta^0)) \mid \left( \frac{L^*_p(\Pi, 1+\eta)}{\Omega} \right). $$
 		Since $\mathcal{O}$ is a local ring with maximal ideal $\mathfrak{P}$, for any $\mathcal{O}$-module $M$, $\mathrm{char}_\mathcal{O}(M)=\mathfrak{P}^{l(M)}$ where $l(-)$ is the length as an $\mathcal{O}$-module. So the divisibility over $\mathcal{O}$ gives us part $(ii)$.
 	\end{proof}
 
 	\par Now we have bounded a Selmer group with $\Delta^0$ local conditions in terms of the $p$-adic $L$-function, we want to use this to obtain a result for the Bloch--Kato Selmer group. Thankfully, this is closely related to the Selmer groups we have already worked with, as shown by the following result.
 	\begin{cor}
 		Suppose $\eta: \Lambda \rightarrow \mathcal{O}^\times$ is a character satisfying hypothesis of Theorem 4.3, with infinity type (as a Hecke character) lying in region $\Sigma^{(0)}$ in Figure 1. Then $\widetilde{H}^2(\mathcal{O}_{E,S},T(\eta^{-1});\Delta^{BK})$ is finite.
 	\end{cor}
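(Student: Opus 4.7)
The strategy is to deduce the Bloch--Kato finiteness from the corresponding statement for the Selmer complex with $\Delta^0$-conditions (Theorem 4.3(ii)) by comparing the two Selmer structures.

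First, the hypotheses of the corollary subsume those of Theorem 4.3 (in particular, $\ker\eta$ not lying in the support of $L_p^*(\Pi,1+\mathbf{j})$ gives $L_p^*(\Pi, 1+\eta) \neq 0$), so Theorem 4.3(ii) immediately yields finiteness of $\widetilde{H}^2(\mathcal{O}_{E,S}, T(\eta^{-1}); \Delta^0)$.

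The central step is to establish the inclusion of Selmer structures $\Delta^0 \subseteq \Delta^{BK}$ on $T(\eta^{-1})$, place by place. At $v \nmid p$ this is immediate since $H^1_{\mathrm{ur}}(E_v, T(\eta^{-1}))$ is tautologically contained in its own $\mathbb{Z}_p$-saturation. At $v \mid p$ the hypothesis that $\eta$ has infinity type in $\Sigma^{(0)}$ guarantees (via Figure 2) that $V(\eta^{-1})$ admits a Panchishkin subrepresentation $V^0_v$ at each of $\wp$ and $\bar{\wp}$, namely $\mathcal{F}^1$ and $\mathcal{F}^2$ respectively. Under the local vanishing of $H^0$ and $H^2$ supplied by Lemma 3.1, the local Euler characteristic formula forces $H^1(E_v, V^0_v) = H^1_f(E_v, V^0_v)$, and the standard Panchishkin comparison identifies $H^1_f(E_v, V(\eta^{-1}))$ with the image of $H^1(E_v, V^0_v) \to H^1(E_v, V(\eta^{-1}))$. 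Passing to the $\mathbb{Z}_p$-lattice inside $V^0_v$ therefore gives $\Delta^0_v \subseteq \Delta^{BK}_v$ inside $H^1(E_v, T(\eta^{-1}))$.

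Local Tate duality then reverses this inclusion on the dual representation: $\Delta^{BK*} \subseteq \Delta^{0*}$ on $T(\eta^{-1})^*(1)$, whence
$$H^1_{\Delta^{BK*}}(\mathcal{O}_{E,S}, T(\eta^{-1})^*(1)) \subseteq H^1_{\Delta^{0*}}(\mathcal{O}_{E,S}, T(\eta^{-1})^*(1)).$$
By the Selmer-complex formula recalled after Definition 3.4, the right-hand side is the Pontryagin dual of $\widetilde{H}^2(\mathcal{O}_{E,S}, T(\eta^{-1}); \Delta^0)$, hence finite; so the left-hand side is finite, and its Pontryagin dual $\widetilde{H}^2(\mathcal{O}_{E,S}, T(\eta^{-1}); \Delta^{BK})$ is then finite as well.

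The main obstacle is the comparison at $v \mid p$: the argument crucially requires a rank-0 Panchishkin subrepresentation at every prime above $p$, which is precisely the defining feature of region $\Sigma^{(0)}$ (together with its conjugate $\Sigma^{(0')}$). Any attempt to extend the corollary outside these two rank-0 regions---in particular to the inert-$p$ setting outlined in Remark 4.2, where such a subrepresentation need not exist---would require a genuinely new input to replace this step.
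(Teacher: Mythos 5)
Your proposal is correct and takes essentially the same approach as the paper: the key input in both is that in region $\Sigma^{(0)}$ the rank-0 Panchishkin local condition at $v \mid p$ agrees (up to finite index) with the Bloch--Kato condition $H^1_f$ — the paper via \citep[Lemma 4.1.7]{FK06} using the positivity of the Hodge numbers of $V(\eta^{-1})^0$, you via the equivalent dimension count — combined with the finite-index saturation comparison at $v \nmid p$ and the finiteness of $\widetilde{H}^2(\mathcal{O}_{E,S},T(\eta^{-1});\Delta^0)$ from Theorem 4.3(ii). The only (harmless) cosmetic difference is the final bookkeeping: the paper compares the two Selmer complexes degree by degree with finite kernels and cokernels, whereas you dualize the inclusion of local conditions and transfer finiteness through $\widetilde{H}^2(\cdot;\Delta^?) \cong H^1_{\Delta^{?*}}(\cdot,M^*(1))^\vee$.
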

 	\begin{proof}
 	The condition that $\eta$ lies in the required range means that $T(\eta)$ is de Rham at places above $p$, and thus $\Delta^{BK}$ can be defined. We aim to construct maps $$\widetilde{H}^i(\mathcal{O}_{E,S},T(\eta^{-1});\Delta^0) \rightarrow \widetilde{H}^i(\mathcal{O}_{E,S},T(\eta^{-1});\Delta^{BK})$$ for $i=0,1,2$ with finite kernel and cokernel. By exact triangle of \citep[11.2.1]{KLZ17} we have maps between local complexes at primes $v$ of $E$. For $i=0$ and $i=2$ the maps are trivially isomorphisms at each $v$ by our vanishing conditions.
 	\par For $i=1$, and $v \nmid p$ these maps are the inclusions of $H^1_{ur}(E_v,T(\eta^{-1}))$ into their saturations, which are injective with finite cokernel. For the remaining case of $i=1$ and $v \mid p$, the result follows from \citep[Lemma 4.1.7]{FK06} since our choice of character $\eta$ ensures that the subrepresentation $V(\eta^{-1})^0$ associated to the rank 0 local condition in region $\Sigma^{(0)}$ in Figure 1 has exactly the positive Hodge numbers of $V(\eta^{-1})$, and thus $$\mathbb{D}_{\mathrm{dR}}(V(\eta^{-1})^0) \cong \mathbb{D}_{\mathrm{dR}}(V(\eta^{-1}))/\mathbb{D}^0_{\mathrm{dR}}(V(\eta^{-1})).$$ Thus the lemma tells us $H^1(E_v, V(\eta^{-1})^0)=H^1_f(E_v,V(\eta^{-1}))$, which gives us the result by \citep[Prop B.2.4]{Rub00}.
 
	\end{proof}
	
	\subsection{The case of inert $p$}

	\par So far in \S4 we have exclusively dealt with the rank 0 conjecture when $p$ splits in $K$, but as mentioned in previous remarks there are a few reasons we cannot repeat the arguments for inert $p$. Firstly, the correct rank 0 Panchishkin subrepresentation doesn't exist when $p$ is inert and correspondingly we don't have a Coleman map which will map our Euler system to a distribution. One possible approach is to use the theory of $L$-analytic characters to construct a regulator map when the twist $\eta$ satisfies some extra conditions, following ideas in \cite{SV15} where a lot of the hard work has already been done. We will need some additional constraints on our twist $\eta$, so that the middle piece of our representation looks like a power of a Lubin--Tate character (or more generally an $E_p$-analytic character). We can then obtain similar results to Theorem 4.3 but over an $E_p$-analytic distribution algebra which doesn't descend to an Iwasawa algebra in the usual sense. The module theory over this algebra would be more complicated and we would have to state our results more carefully. This is a work in progress by the author.
	\par Moreover, we are yet to connect our motivic $p$-adic $L$-function to an automorphic one interpolating complex $L$-values (after all this is the purpose of an Iwasawa main conjecture), which has been constructed in the split case in the work of Gyujin Oh \citep{Oh22} using a higher Hida theory for GU(2,1) constructed by Manh-Tu Nguyen \citep{Ngu20}. Future work will aim in reconciling these definitions via an explicit reciprocity law, and subsequently to do the same in the inert case despite the lack of $p$-adic analytic constructions or a higher Hida theory at the time of submission.
	
	\subsubsection*{Acknowledgements}
		I am grateful to my supervisor David Loeffler for presenting me with this problem and the many paths that have emerged from it, and for the helpful comments and guidance. I am also grateful for the useful and constructive conversations with Kazim Büyükboduk, Pak-Hin Lee, Rob Rockwood, Otmar Venjakob and many others.
	
	\bibliographystyle{amsalpha}
	\bibliography{bibliography}
	
\end{document}